\documentclass[12pt, reqno, a4paper]{amsart}

\usepackage{amssymb}
\usepackage{hyperref}
\usepackage[pdftex]{lscape}

\usepackage[english]{babel}

\oddsidemargin=0in%
\evensidemargin=0in%
\topmargin=-30pt%
\textheight=700pt%
\textwidth=6.5in%

\sloppy

\theoremstyle{plain}
\newtheorem{theorem}{Theorem}[section]
\newtheorem{lemma}[theorem]{Lemma}
\newtheorem{proposition}[theorem]{Proposition}
\newtheorem{corollary}[theorem]{Corollary}

\theoremstyle{remark}
\newtheorem{remark}[theorem]{Remark}

\theoremstyle{definition}
\newtheorem{example}[theorem]{Example}

\numberwithin{equation}{section}

\DeclareMathOperator{\Id}{Id}

\DeclareMathOperator{\id}{id}
\DeclareMathOperator{\ch}{char}

\DeclareMathOperator{\UT}{UT}

\DeclareMathOperator{\supp}{supp}

\DeclareMathOperator{\End}{End}

\DeclareMathOperator{\sign}{sign}

\DeclareMathOperator{\Hom}{Hom}

\DeclareMathOperator{\PIexp}{PIexp}
\newcommand{\hatotimes}{\mathbin{\widehat{\otimes}}}

\begin{document}

\title{On $H$-simple not necessarily associative algebras}

\author{A.\,S.~Gordienko}
\address{Vrije Universiteit Brussel, Belgium}
\email{alexey.gordienko@vub.ac.be}

\keywords{Polynomial identity, $H$-module algebra, generalized $H$-action, codimension, PI-exponent, non-associative algebra, grading, semigroup, free-forgetful adjunction.}

\begin{abstract} An algebra $A$ with a generalized $H$-action is a generalization of an $H$-module algebra where $H$ is just an associative algebra with $1$ and a relaxed compatibility condition
between the multiplication in $A$ and the $H$-action on $A$ holds.
At first glance this notion may appear too general, however it enables to work with algebras endowed with various kinds of additional structures (e.g. (co)module algebras over Hopf algebras, graded algebras, algebras with an action of a (semi)group by (anti)endomorphisms). This approach proves to be especially fruitful in the theory of polynomial identities. We show that if $A$ is a finite dimensional (not necessarily associative) algebra over a field of characteristic $0$ and $A$ is simple
with respect to a generalized $H$-action,
then there exists $\lim_{n\to\infty}\sqrt[n]{c_n^H(A)} \in \mathbb R_+$
where $\left(c_n^H(A)\right)_{n=1}^\infty$ is the sequence of codimensions of polynomial $H$-identities of $A$. 
In particular, if $A$ is a finite dimensional (not necessarily group graded) 
graded-simple algebra, then there exists $\lim_{n\to\infty}\sqrt[n]{c_n^{\mathrm{gr}}(A)} \in \mathbb R_+$
where $\left(c_n^{\mathrm{gr}}(A)\right)_{n=1}^\infty$ is the sequence of codimensions of graded polynomial identities of $A$. 
In addition, we study the free-forgetful adjunctions corresponding to
(not necessarily group) gradings and generalized $H$-actions.
\end{abstract}

\subjclass[2010]{Primary 17A30; Secondary 16R10, 16R50, 16T05, 17A36, 17A50,
18A40, 20C30.}

\thanks{Supported by Fonds Wetenschappelijk Onderzoek~--- Vlaanderen post doctoral fellowship (Belgium).}

\maketitle

\section{Introduction}

Study of polynomial identities in algebras is an important aspect of study of algebras themselves. It turns out that the asymptotic behaviour of numeric characteristics of polynomial identities
of an algebra is tightly related to the structure of the algebra~\cite{ZaiGia, ZaiLie}.

In 1980s, S.\,A.~Amitsur conjectured that if an associative algebra $A$ over a field of characteristic $0$ satisfies a nontrivial
polynomial identity, then there exists an integer \textit{PI-exponent} $\lim_{n\to\infty}\sqrt[n]{c_n(A)}$ where $c_n(A)$ is the codimension sequence of ordinary polynomial identities of $A$.
(See the definition of $c_n(A)$ in Remark~\ref{RemarkOrdinaryCodim} below.)
The original Amitsur conjecture was proved by A.~Giambruno and M.\,V.~Zaicev~\cite{ZaiGiaAmitsur} in 1999.
Its analog for finite dimensional Lie algebras was proved by M.\,V.~Zaicev~\cite{ZaiLie}
in 2002. In 2011
A.~Giambruno, I.\,P.~Shestakov and M.\,V.~Zaicev proved
the analog of the conjecture for finite dimensional
Jordan and alternative algebras~\cite{GiaSheZai}.

In general, the analog of Amitsur's conjecture for arbitrary non-associative
algebras and even for infinite dimensional Lie algebras is wrong.
First, the codimension growth can be overexponential~\cite{Volichenko}.
Second, the exponent of the codimension growth can be non-integer~\cite{GiaMishZaiCodimGrFun, ZaiMishchFracPI, VerZaiMishch}. Third, in 2014 M.\,V.~Zaicev constructed an example
of an infinite dimensional non-associative algebra $A$ for which
$\mathop{\underline\lim}_{n\to\infty}\sqrt[n]{c_n(A)}=1$
and  $\mathop{\overline\lim}_{n\to\infty}\sqrt[n]{c_n(A)} > 1$~\cite{ZaicevPIexpDoesNotExist}.

Algebras endowed with an additional structure, e.g. a grading, an action of a group, a Lie algebra or a Hopf algebra, find their applications in many areas of mathematics and physics.
 Gradings on simple Lie and associative algebras
have been studied extensively~\cite{BahturinSehgalZaicevGrAssoc, BahZaicAllGradings, BahturinZaicevSegal, ElduqueKochetov, PZ89, HPP98, HPP00}. For algebras with an additional structure, it is natural to consider the corresponding polynomial identities.

E.~Aljadeff, A.~Giambruno, and D.~La~Mattina~\cite{AljaGia, AljaGiaLa, GiaLa} proved that if an  associative PI-algebra is graded by a finite group, then the graded PI-exponent exists and it is an integer,
i.e. the graded analog of Amitsur's conjecture holds for
such algebras.
The same is true for finite dimensional associative and Lie algebras
graded by arbitrary groups~\cite[Theorem~3]{ASGordienko9}, \cite[Theorem~1]{ASGordienko5}.
If $H$ is a finite dimensional semisimple Hopf algebra,
then the codimensions of polynomial $H$-identities of any finite dimensional $H$-module
associative or Lie algebra satisfy the analog of Amitsur's conjecture too~\cite[Theorem~3]{ASGordienko3}, \cite[Theorem~7]{ASGordienko5}.
If an algebra is graded by a semigroup, then its graded PI-exponent
can be non-integer even if the algebra itself is finite dimensional and associative~\cite[Theorem 5]{ASGordienko13} (see also~\cite{GordienkoJanssensJespers}).

In order to embrace the cases when an algebra is graded by a semigroup or an infinite group,
or a group is acting on an algebra not only by automorphisms, but by anti-automorphisms too, it is
useful to consider more general additional structures.
Although polynomial identities in algebras endowed with an arbitrary set of multilinear operations were considered in the literature (see e.g.~\cite[Section~49.2]{Razmyslov}),
it turns out that so-called generalized $H$-actions
(see the definition in Section~\ref{SectionHmodGen})
where $H$ is an arbitrary associative algebra with $1$ are the most adequate for our purposes.
 Probably, the first who used such actions and studied polynomial $H$-identities was Allan Berele~\cite[remark before Theorem~15]{BereleHopf} in 1996 (see also the paper of Yu.\,A.~Bahturin and V.\,V.~Linchenko~\cite{BahturinLinchenko}).
The example constructed in~\cite[Theorem 5]{ASGordienko13}
shows that for generalized $H$-actions the exponent of the $H$-codimension
growth can be non-integer even for finite dimensional $H$-simple associative algebras. Therefore, the natural question arises as to whether $H$-PI-exponent exists at all, at least in the case when the algebra is $H$-simple.

In 2012 A.~Giambruno and M.\,V.~Zaicev
proved the existence of the ordinary PI-exponent
for any finite dimensional simple algebra not necessarily associative~\cite[Theorem 3]{ZaiGiaFinDimSuperAlgebras}.
Recently D.~Repov\v s and M.\,V.~Zaicev proved 
the existence of the graded PI-exponent
for finite dimensional graded-simple algebras graded by commutative semigroups~\cite[Theorem 2]{ZaicevRepovs}. 

In the present article we combine A.~Giambruno and M.\,V.~Zaicev's
techniques with the techniques of generalized $H$-actions
and show that for any finite dimensional $H$-simple algebra with a generalized $H$-action
 there exists an $H$-PI-exponent (Theorem~\ref{TheoremHSimpleHPIexpHNAssoc}). This enables to prove (see Corollary~\ref{CorollaryGradedExistsExponent}) the existence
of the graded PI-exponent for any finite dimensional graded-simple algebra graded in a very general sense (not necessary by a semigroup, see the precise
definition of such a grading in Example~\ref{ExampleGr}).
Note that the notion of an $H$-simple algebra is much wider than the notion of a simple algebra since, e.g., an $H$-simple associative or Lie algebra is not even necessarily semisimple.

One of the important steps in the proof of Theorem~\ref{TheoremHSimpleHPIexpHNAssoc} is Theorem~\ref{TheoremUpperBoundColengthHNAssoc}
where we show that $H$-colengths of a finite dimensional algebra with a generalized $H$-action are polynomially bounded (see Corollary~\ref{CorollaryUpperBoundColengthGraded} 
 for the analog in the graded case).

Polynomial $H$-identities
and graded polynomial identities
are elements of the algebras $F\lbrace X | H \rbrace$
and $F\lbrace X^{T\text{-}\mathrm{gr}} \rbrace$ defined 
in Sections~\ref{SectionHPI} and~\ref{SectionGradedPI}, respectively.
In fact, if $H$ is an arbitrary unital associative algebra
and $T$ is an arbitrary set, then neither $F\lbrace X | H \rbrace$
is an algebra with a generalized $H$-action, nor $F\lbrace X^{T\text{-}\mathrm{gr}} \rbrace$ is a $T$-graded algebra (which, however,
does not prevent studying polynomial $H$-identities in algebras
with generalized $H$-actions and graded polynomial identities
in $T$-graded algebras at all).
In Section~\ref{SectionGrHAdjunction} we show that if we enlarge
the categories of algebras in a proper way, then both $F\lbrace X | H \rbrace$ and $F\lbrace X^{T\text{-}\mathrm{gr}} \rbrace$ will correspond to free-forgetful adjunctions.

 \section{Algebras with a generalized $H$-action}\label{SectionHmodGen}
 
  Let $H$ be an arbitrary associative algebra with $1$ over a field $F$.
We say that a (not necessarily associative) algebra $A$ is an algebra with a \textit{generalized $H$-action}
if $A$ is a left $H$-module
and for every $h \in H$ there exist some $k\in \mathbb N$ and some $h'_i, h''_i, h'''_i, h''''_i \in H$, $1\leqslant i \leqslant k$,
such that 
\begin{equation}\label{EqGeneralizedHopf}
h(ab)=\sum_{i=1}^k\bigl((h'_i a)(h''_i b) + (h'''_i b)(h''''_i a)\bigr) \text{ for all } a,b \in A.
\end{equation}

Equivalently, there exist linear maps $\Delta, \Theta \colon H \to H\otimes H$ (not necessarily coassociative)
such that 
$$ h(ab)=\sum\bigl((h_{(1)} a)(h_{(2)} b) + (h_{[1]} b)(h_{[2]} a)\bigr) \text{ for all } a,b \in A.$$ (Here we use the notation $\Delta(h)= \sum h_{(1)} \otimes h_{(2)}$ and $\Theta(h)= \sum  h_{[1]} \otimes h_{[2]}$.)

\begin{example}\label{ExampleHmodule} 
An algebra $A$
over a field $F$
is a \textit{(left) $H$-module algebra}
for some Hopf algebra $H$
if $A$ is endowed with a structure of a (left) $H$-module such that
$h(ab)=(h_{(1)}a)(h_{(2)}b)$
for all $h \in H$, $a,b \in A$. Here we use Sweedler's notation
$\Delta h = \sum h_{(1)} \otimes h_{(2)}$ where $\Delta$ is the comultiplication
in $H$ and the symbol of the sum is omitted. If $A$ is an $H$-module algebra,
then $A$ is an algebra with a generalized $H$-action.
\end{example}

\begin{example}\label{ExampleIdFT}
Recall that if $T$ is a semigroup, then the \textit{semigroup algebra} $FT$ over a field is the vector space with the formal basis $(t)_{t\in T}$ and the multiplication induced by the one in $T$.
Let $A$ be an associative algebra with an action 
of a semigroup $T$ by endomorphisms and anti-endomorphisms. Then $A$ is an algebra with
 a generalized $FT$-action.
\end{example}

\begin{example}\label{ExampleGr}
Let $A=\bigoplus_{t\in T} A^{(t)}$ be a \textit{graded} algebra for some set of indices $T$, i.e. for every $s,t \in T$ there exists $r\in T$
such that $A^{(s)}A^{(t)}\subseteq A^{(r)}$. Denote this grading by $\Gamma$. Note that $\Gamma$ defines on $T$
a partial operation $\star$ with the domain $T_0:=\lbrace (s,t) \mid A^{(s)}A^{(t)} \ne 0 \rbrace$ by $s\star t := r$.
Consider the algebra $F^T$ of all functions from $T$ to $F$ with pointwise operations.
Then $F^T$ acts on $A$ naturally: $ha = h(t)a$ for all $a\in A^{(t)}$.
Denote by $\zeta \colon F^T \to \End_F(A)$
the corresponding homomorphism.
Let $h_t(s):=\left\lbrace\begin{smallmatrix} 1 & \text{if} & s=t,\\ 0 & \text{if} & s\ne t.\end{smallmatrix} \right.$
If the \textit{support} $$\supp \Gamma := \lbrace t\in T \mid A^{(t)}\ne 0\rbrace$$ of $\Gamma$ is finite, $T_0$ is finite too and we have
\begin{equation}\label{EqIdentityHFiniteSupp}h_r(ab)=\sum_{\substack{(s,t)\in T_0,\\ r=s\star t}}
h_s(a)h_t(b). \end{equation}
(Since the expression is linear in $a$ and $b$, it is sufficient to check it only for homogeneous $a,b$.) In this case $(h_t)_{t\in \supp \Gamma}$
generates $F^T$ modulo $\ker \zeta$ as an $F$-vector space and, by the linearity, \eqref{EqIdentityHFiniteSupp} implies~(\ref{EqGeneralizedHopf}) for every $h\in F^T$. Therefore, in the case of a finite support of the grading, $A$ is an algebra with a generalized $F^T$-action.
\end{example}

Let $A$ be an algebra with a generalized $H$-action for some associative algebra $H$ with $1$ over a field $F$. We say that a subspace $V\subseteq A$ is  \textit{invariant}
under the $H$-action if $HV=V$, i.e. $V$ is an $H$-submodule.
If $A^2\ne 0$ and $A$ has no non-trivial
two-sided $H$-invariant ideals, we say that $A$ is \textit{$H$-simple}.

\section{Polynomial $H$-identities}\label{SectionHPI}

Let $F$ be a field and let $Y$ be a set.
 Denote by $F \lbrace Y \rbrace$ the absolutely free non-associative algebra
 on the set $Y$, i.e. the algebra of all non-associative polynomials in variables
 from $Y$ and coefficients from the field $F$.
  Then $F \lbrace Y \rbrace = \bigoplus_{n=1}^\infty F \lbrace Y \rbrace^{(n)}$
  where $F \lbrace Y \rbrace^{(n)}$ is the linear span of all monomials of total degree $n$.
Let $H$ be an associative algebra with $1$ over $F$.
  
       Consider the algebra $$F \lbrace Y | H\rbrace := \bigoplus_{n=1}^\infty H^{{}\otimes n} \otimes F \lbrace Y \rbrace^{(n)}$$
   with the multiplication $(u_1 \otimes w_1)(u_2 \otimes w_2):=(u_1 \otimes u_2) \otimes w_1w_2$
   for all $u_1 \in  H^{{}\otimes j}$, $u_2 \in  H^{{}\otimes k}$,
   $w_1 \in F \lbrace Y \rbrace^{(j)}$, $w_2 \in F \lbrace Y \rbrace^{(k)}$.
We use the notation $$y^{h_1}_1
y^{h_2}_2\cdots y^{h_n}_n := (h_1 \otimes h_2 \otimes \cdots \otimes h_n) \otimes y_1
y_2\cdots y_n$$ (the arrangements of brackets on $y_j$ and on $y^{h_j}_j$
are the same). Here $h_1 \otimes h_2 \otimes \cdots \otimes h_n \in H^{{}\otimes n}$,
$y_1, y_2,\dots, y_n \in Y$. In addition, we identify $Y$ with the subset
$\lbrace y^{1_H} \mid y\in Y\rbrace \subset F \lbrace Y | H\rbrace$.

Note that if $(\gamma_\beta)_{\beta \in \Lambda}$ is a basis in $H$, 
then $F \lbrace Y | H\rbrace$ is isomorphic to the absolutely 
free non-associative algebra over $F$ with free formal
generators $y^{\gamma_\beta}$, $\beta \in \Lambda$, $y \in Y$.
We call $F \lbrace Y | H\rbrace$ \textit{the absolutely free non-associative algebra on $Y$
with symbols from $H$}.

Below we consider $F \lbrace X | H \rbrace$ where $X := \lbrace x_1, x_2, x_3, \dots \rbrace$.
The elements of $F \lbrace X | H\rbrace$ are called \textit{$H$-polynomials}.

Let $A$ be an algebra over $F$ with a generalized $H$-action.
Any map $\psi \colon X \to A$ has a unique homomorphic extension $\bar\psi
\colon F \lbrace X | H\rbrace \to A$ such that $\bar\psi(x_i^h)=h\psi(x_i)$
for all $i \in \mathbb N$ and $h \in H$.
 An $H$-polynomial
 $f \in F\lbrace X | H\rbrace$
 is a \textit{polynomial $H$-identity} of $A$ if $\bar\psi(f)=0$
for all maps $\psi \colon X \to A$. In other words, $f(x_1, x_2, \dots, x_n)$
 is an $H$-identity of $A$
if and only if $f(a_1, a_2, \dots, a_n)=0$ for any $a_i \in A$.
 In this case we write $f \equiv 0$.
The set $\Id^{H}(A)$ of all polynomial $H$-identities of $A$ is an ideal of $F\lbrace X | H\rbrace$.

Denote by $W^H_n$, $n\in\mathbb N$, the space of all multilinear non-associative $H$-polynomials
in $x_1, \dots, x_n$, i.e.
$$W^{H}_n = \langle x^{h_1}_{\sigma(1)}
x^{h_2}_{\sigma(2)}\cdots x^{h_n}_{\sigma(n)}
\mid h_i \in H, \sigma\in S_n \rangle_F \subset F \lbrace X | H \rbrace.$$
(We consider all possible arrangements of brackets.)
Then the number $c^H_n(A):=\dim\left(\frac{W^H_n}{W^H_n \cap \Id^H(A)}\right)$
is called the $n$th \textit{codimension of polynomial $H$-identities}
or the $n$th \textit{$H$-codimension} of $A$.
If $f\in W_n^H$, then its image in $\frac{W^H_n}{W^H_n \cap \Id^H(A)}$
is denoted by $\bar f$. The limit $\PIexp^H(A):=\lim_{n\to\infty} \sqrt[n]{c^H_n(A)}$,
if it exists, is called \textit{the $H$-PI-exponent} of $A$.

One of the main tools in the investigation of polynomial
identities is provided by the representation theory of symmetric groups.
 The symmetric group $S_n$  acts
 on the space $\frac {W^H_n}{W^H_{n}
  \cap \Id^H(A)}$
  by permuting the variables.
  If the characteristic of the base field $F$ is zero, 
  then irreducible $FS_n$-modules are described by partitions
  $\lambda=(\lambda_1, \dots, \lambda_s)\vdash n$ and their
  Young diagrams $D_\lambda$.
   The character $\chi^H_n(A)$ of the
  $FS_n$-module $\frac {W^H_n}{W^H_n
   \cap \Id^H(A)}$ is
   called the $n$th
  \textit{cocharacter} of polynomial $H$-identities of $A$.
  We can rewrite it as
  a sum $$\chi^H_n(A)=\sum_{\lambda \vdash n}
   m(A, H, \lambda)\chi(\lambda)$$ of
  irreducible characters $\chi(\lambda)$.
  The number $\ell_n^H(A):=\sum_{\lambda \vdash n}
   m(A, H, \lambda)$ is called the $n$th
  \textit{colength} of polynomial $H$-identities of $A$.
Let  $e_{T_{\lambda}}=a_{T_{\lambda}} b_{T_{\lambda}}$
and
$e^{*}_{T_{\lambda}}=b_{T_{\lambda}} a_{T_{\lambda}}$
where
$a_{T_{\lambda}} = \sum_{\pi \in R_{T_\lambda}} \pi$
and
$b_{T_{\lambda}} = \sum_{\sigma \in C_{T_\lambda}}
 (\sign \sigma) \sigma$,
be the Young symmetrizers corresponding to a Young tableau~$T_\lambda$.
Then $M(\lambda) = FS_n e_{T_\lambda} \cong FS_n e^{*}_{T_\lambda}$
is an irreducible $FS_n$-module corresponding to
 a partition~$\lambda \vdash n$.
  We refer the reader to~\cite{Bahturin, DrenKurs, ZaiGia}
   for an account
  of $S_n$-representations and their applications to polynomial
  identities.
  
  \begin{remark}\label{RemarkOrdinaryCodim} 
 If $A$ is an ordinary algebra, then the ordinary polynomial identities and cocharacters of $A$
can be defined as $H$-identities and $H$-cocharacters
for $H=F$ acting on $A$ trivially:
$W_n := W^F_n$,
$\Id(A):=\Id^F(A)$,
$c_n(A):=c_n^F(A)$,
$\chi_n(A) := \chi^F_n(A)$,
$m(A,\lambda):= m(A,F,\lambda)$,
$\ell_n(A):= \ell_n^F(A)$.
\end{remark}

\begin{remark}
Note that here we do not consider any $H$-action on $F \lbrace Y | H\rbrace$ itself. However $F \lbrace - | H\rbrace$
can be viewed as a free functor if we enlarge the category
of algebras with a generalized $H$-action in a proper way
(see Section~\ref{SectionHAdjunction}).
\end{remark}

\begin{remark}\label{RemarkHActionOnWHn}
The real importance of~\eqref{EqGeneralizedHopf} is hidden in the fact that~\eqref{EqGeneralizedHopf} makes it possible to define a structure of a left $H$-module on the vector space $\frac {W^H_n}{W^H_{n}  \cap \Id^H(A)}$ in the natural way as follows. Every multilinear $H$-polynomial can be regarded as a multilinear function on $A$
taking values in $A$ and $W^H_{n}  \cap \Id^H(A)$ is precisely the kernel
of the corresponding homomorphism $W^H_n \to \Hom_F(A^{{}\otimes n}; A)$ of $FS_n$-modules
where $S_n$ is acting by permuting the arguments.
Hence we have an embedding $\frac {W^H_n}{W^H_{n}  \cap \Id^H(A)} \subseteq \Hom_F(A^{{}\otimes n}; A)$.
The space $\Hom_F(A^{{}\otimes n}; A)$ is a left $H$-module and operators from $H$ commute with 
operators from $S_n$: $(hg)(a_1, \ldots, a_n):= hg(a_1, \ldots, a_n)$ for $h\in H$,
$g\in \Hom_F(A^{{}\otimes n}; A)$ and $a_1, \ldots, a_n \in A$.
Given $f\in W^H_{n}$ and $h\in H$, we can apply~\eqref{EqGeneralizedHopf} several times
and rewrite the function $h\bar f$ as a linear
combination of products of the variables $x_i$ where the operators from $H$ are applied only to the variables $x_i$ themselves and not to their products. In other words, the 
function $h\bar f$ on $A$ can be presented (not necessarily in a unique way) as a
function corresponding to an $H$-polynomial from $W^H_n$.  
As a consequence, $\frac {W^H_n}{W^H_{n}  \cap \Id^H(A)}$ is an $H$-submodule.
 Denote by $f^h$ any $H$-polynomial such that $\overline{f^h}=h\bar f$.
 If $\ch F = 0$,  $f^h \notin \Id^H(A)$ and $FS_n \bar f \cong M(\lambda)$ for some $\lambda \vdash n$, then $FS_n h\bar f$ is a nonzero homomorphic
 image of the irreducible $FS_n$-module $M(\lambda)$. As a consequence, $FS_n h\bar f \cong M(\lambda)$ too.
We will use this property in Section~\ref{SectionHPIexpExistHSimple}.
 \end{remark}

\begin{remark}\label{RemarkHAssNAssCodimTheSame} Suppose $A$ is associative.
One can analogously construct the free associative algebra $F\langle X | H \rangle$ on $X$ with symbols from $H$
(see~\cite[Section 3.1]{ASGordienko3})
and treat polynomial $H$-identities
as elements of an ideal $\Id^H_{\mathrm{assoc}}(A)$ of $F\langle X | H \rangle$. However, the map $x_i^h \mapsto x_i^h$, $i\in \mathbb N$, $h\in H$,
induces an isomorphism $F \lbrace X | H\rbrace/ \Id^H_{\mathrm{assoc}}(A) \cong F\langle X | H \rangle/\Id^H(A)$ of algebras
and isomorphisms $\frac {W^H_n}{W^H_{n}
  \cap \Id^H(A)} \cong \frac {P^H_n}{P^H_{n}
  \cap \Id_{\mathrm{assoc}}^H(A)}$ of $FS_n$-modules
  where $n\in\mathbb N$ and $P^H_n$ is the $FS_n$-module
  of associative $H$-polynomials multilinear in $x_1, x_2, \dots, x_n$.
  In particular, the definitions of codimensions and cocharacters
  do not depend on whether we use $F \lbrace X | H\rbrace$ or $F\langle X | H \rangle$.
Analogous remarks can be made in the case when $A$ is a Lie algebra (see~\cite[Section 1.3]{ASGordienko5}).
\end{remark}

\section{Graded polynomial identities}\label{SectionGradedPI}

Let $T$ be a set and let $F$ be a field. 

Consider the absolutely free non-associative algebra $F\lbrace X^{T\text{-}\mathrm{gr}} \rbrace$
on the disjoint union $$X^{T\text{-}\mathrm{gr}}:=\bigsqcup_{t \in T}X^{(t)}$$
of the sets $X^{(t)} = \{ x^{(t)}_1,
x^{(t)}_2, \dots \}$

We say that $f=f(x^{(t_1)}_{i_1}, \dots, x^{(t_s)}_{i_s})$ is
a \textit{graded polynomial identity} for
 a $T$-graded algebra $A=\bigoplus_{t\in T}
A^{(t)}$
and write $f\equiv 0$
if $f(a^{(t_1)}_1, \dots, a^{(t_s)}_s)=0$
for all $a^{(t_j)}_j \in A^{(t_j)}$, $1 \leqslant j \leqslant s$.
  The set $\Id^{T\text{-}\mathrm{gr}}(A)$ of graded polynomial identities of
   $A$ is
an ideal of $F\lbrace
 X^{T\text{-}\mathrm{gr}}\rbrace$.

\begin{example}\label{ExampleIdGr}
Consider the multiplicative semigroup $T=\mathbb Z_2 = \lbrace \bar 0, \bar 1 \rbrace$
and the $T$-grading $\UT_2(F)=\UT_2(F)^{(\bar 0)}\oplus \UT_2(F)^{(\bar 1)}$
on the algebra $\UT_2(F)$ of upper triangular $2\times 2$ matrices over a field $F$
defined by
 $\UT_2(F)^{(\bar 1)}=\left(
\begin{array}{cc}
F & 0 \\
0 & F
\end{array}
 \right)$ and $\UT_2(F)^{(\bar 0)}=\left(
\begin{array}{cc}
0 & F \\
0 & 0
\end{array}
 \right)$. We have $$[x^{(\bar 1)}, y^{(\bar 1)}]:=x^{(\bar 1)} y^{(\bar 1)} - y^{(\bar 1)} x^{(\bar 1)}
\in \Id^{T\text{-}\mathrm{gr}}(\UT_2(F))$$
and $x^{(\bar 0)} y^{(\bar 0)} 
\in \Id^{T\text{-}\mathrm{gr}}(\UT_2(F))$.
\end{example}

Let
$$W^{T\text{-}\mathrm{gr}}_n := \langle x^{(t_1)}_{\sigma(1)}
x^{(t_2)}_{\sigma(2)}\cdots x^{(t_n)}_{\sigma(n)}
\mid t_i \in T, \sigma\in S_n \rangle_F \subset F \lbrace X^{T\text{-}\mathrm{gr}} \rbrace$$
(with all possible arrangements of brackets), $n \in \mathbb N$.
The number $$c^{T\text{-}\mathrm{gr}}_n(A):=\dim\left(\frac{W^{T\text{-}\mathrm{gr}}_n}{W^{T\text{-}\mathrm{gr}}_n \cap \Id^{T\text{-}\mathrm{gr}}(A)}\right)$$
is called the $n$th \textit{codimension of graded polynomial identities}
or the $n$th \textit{graded codimension} of $A$.

The symmetric group $S_n$  acts
 on the space $\frac {W^{T\text{-}\mathrm{gr}}_n}{W^{T\text{-}\mathrm{gr}}_{n}
  \cap \Id^{T\text{-}\mathrm{gr}}(A)}$
  by permuting the variables within each $X^{(t)}$: $$\sigma x^{(t_1)}_{i_1}\cdots x^{(t_n)}_{i_n}
  := x^{(t_1)}_{\sigma(i_1)}\cdots x^{(t_n)}_{\sigma(i_n)}$$
  for $n\in\mathbb N$, $\sigma \in S_n$, $1\leqslant i_k\leqslant n$, $1\leqslant k \leqslant n$.
   The character $\chi^{T\text{-}\mathrm{gr}}_n(A)$ of the
  $FS_n$-module $\frac {W^{T\text{-}\mathrm{gr}}_n}{W^{T\text{-}\mathrm{gr}}_n
   \cap \Id^{T\text{-}\mathrm{gr}}(A)}$ is
   called the $n$th
  \textit{cocharacter} of graded polynomial identities of $A$.
  If $\ch F = 0$, we can rewrite it as
  a sum $$\chi^{T\text{-}\mathrm{gr}}_n(A)=\sum_{\lambda \vdash n}
   m(A, T\text{-}\mathrm{gr}, \lambda)\chi(\lambda)$$ of
  irreducible characters $\chi(\lambda)$.
  The number $\ell_n^{T\text{-}\mathrm{gr}}(A):=\sum_{\lambda \vdash n}
   m(A, T\text{-}\mathrm{gr}, \lambda)$ is called the $n$th
  \textit{colength} of graded polynomial identities of $A$.

The proposition below provides a relation between the ordinary and the graded codimensions.

\begin{proposition} 
Let $A$ be a $T$-graded algebra over a field $F$ for some set $T$
not necessarily finite.
Then $c_n(A) \leqslant c^{T\text{-}\mathrm{gr}}_n(A)$.
If $T$ is finite, then $c^{T\text{-}\mathrm{gr}}_n(A)
\leqslant |T|^n c_n(A)$ for all $n\in\mathbb N$.
\end{proposition}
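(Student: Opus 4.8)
The plan is to realize both inequalities through explicit linear maps between the space $W_n$ of ordinary multilinear non-associative polynomials in $x_1,\dots,x_n$ (that is, $W^F_n$ in the notation above) and the graded space $W^{T\text{-}\mathrm{gr}}_n$. The single fact that drives everything is that a multilinear polynomial $f$ satisfies $f(a_1,\dots,a_n)=\sum_{\bar t}f\bigl(a_1^{(t_1)},\dots,a_n^{(t_n)}\bigr)$ for any $a_i\in A$ with homogeneous decompositions $a_i=\sum_t a_i^{(t)}$, so that an ordinary identity is exactly a graded identity that holds on all homogeneous substitutions, summed up.

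I would prove the bound $c^{T\text{-}\mathrm{gr}}_n(A)\leqslant |T|^n c_n(A)$ for finite $T$ first. Decompose $W^{T\text{-}\mathrm{gr}}_n=\bigoplus_{\tau}W^{(\tau)}_n$ over the $|T|^n$ functions $\tau\colon\{1,\dots,n\}\to T$, where $W^{(\tau)}_n$ is spanned by the multilinear monomials in which the variable $x_i$ carries the superscript $(\tau(i))$. This decomposition is respected by $\Id^{T\text{-}\mathrm{gr}}(A)$: writing a multilinear graded identity $g=\sum_\tau g_\tau$, one isolates $g_\tau$ by the homogeneous substitution sending $x_i^{(t)}$ to $0$ whenever $t\ne\tau(i)$, so each $g_\tau\in\Id^{T\text{-}\mathrm{gr}}(A)$; hence $c^{T\text{-}\mathrm{gr}}_n(A)=\sum_\tau\dim\bigl(W^{(\tau)}_n/(W^{(\tau)}_n\cap\Id^{T\text{-}\mathrm{gr}}(A))\bigr)$. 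Now ``forgetting the superscripts'' is a linear isomorphism $\rho_\tau\colon W^{(\tau)}_n\to W_n$, and for every ordinary identity $f\in W_n\cap\Id(A)$ the graded polynomial $\rho_\tau^{-1}(f)$ lies in $\Id^{T\text{-}\mathrm{gr}}(A)$, since any evaluation of $\rho_\tau^{-1}(f)$ on homogeneous elements of $A$ is in particular an evaluation of $f$ on elements of $A$. Thus $W^{(\tau)}_n\cap\Id^{T\text{-}\mathrm{gr}}(A)\supseteq\rho_\tau^{-1}(W_n\cap\Id(A))$, so each summand above is at most $c_n(A)$, and summing over the $|T|^n$ choices of $\tau$ yields the claim.

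For $c_n(A)\leqslant c^{T\text{-}\mathrm{gr}}_n(A)$ with $T$ arbitrary, the naive substitution $x_i\mapsto\sum_{t\in T}x_i^{(t)}$ is unavailable when $T$ is infinite, so I would localize. Since $W_n$ is finite dimensional, there are finitely many tuples $\bar a_1,\dots,\bar a_m\in A^n$ with $\bigcap_{k=1}^m\ker(f\mapsto f(\bar a_k))=W_n\cap\Id(A)$. Let $S\subseteq T$ be the finite set of all $t$ occurring among the nonzero homogeneous components of the entries of $\bar a_1,\dots,\bar a_m$, and define $\iota_S\colon W_n\to W^{T\text{-}\mathrm{gr}}_n$ by $\iota_S(f)=\sum_{\bar t\in S^n}f^{[\bar t]}$, where $f^{[\bar t]}$ is obtained from $f$ by replacing each $x_i$ with $x_i^{(t_i)}$; this is the substitution $x_i\mapsto\sum_{t\in S}x_i^{(t)}$, and it is injective because distinct $f^{[\bar t]}$ involve disjoint sets of graded variables. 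If $\iota_S(f)\in\Id^{T\text{-}\mathrm{gr}}(A)$, then evaluating $\iota_S(f)$ on the homogeneous components of any $\bar a_k$ reconstructs, by multilinearity, the value $f(\bar a_k)$, which is therefore $0$ for every $k$; hence the kernel of $\iota_S$ composed with the projection modulo $\Id^{T\text{-}\mathrm{gr}}(A)$ is contained in $W_n\cap\Id(A)$, and $\iota_S$ descends to an injection $W_n/(W_n\cap\Id(A))\hookrightarrow W^{T\text{-}\mathrm{gr}}_n/(W^{T\text{-}\mathrm{gr}}_n\cap\Id^{T\text{-}\mathrm{gr}}(A))$, giving $c_n(A)\leqslant c^{T\text{-}\mathrm{gr}}_n(A)$.

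I expect the only real obstacle to be this infinite-$T$ case of the first inequality: one must avoid the illegitimate infinite ``diagonal'' substitution, and the way around it is precisely to exploit that $W_n$, and hence $c_n(A)$, is finite, so that finitely many evaluations — and therefore finitely many homogeneous degrees in $T$ — already detect every multilinear identity of degree $n$. The rest is routine bookkeeping with multilinear substitutions and direct-sum decompositions.
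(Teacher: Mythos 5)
Your proof is correct and follows essentially the same route as the paper's: for the upper bound, homogeneous substitution reduces each degree-component of $W^{T\text{-}\mathrm{gr}}_n$ to the ordinary case, and for the lower bound, the finite-dimensionality of $W_n$ is used to select a finite set of degrees in $T$ supporting a diagonal embedding of $W_n/(W_n\cap\Id(A))$ into the graded quotient. The only cosmetic difference is that you bound each of the $|T|^n$ direct summands by $c_n(A)$, whereas the paper exhibits a spanning set of size $|T|^n c_n(A)$.
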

\begin{proof}
Let $t_1, \dots, t_n \in T$.
Denote by $W_{t_1, \dots, t_n}$ the vector space of
multilinear non-associative polynomials in $x_1^{(t_1)}, \dots, x_n^{(t_n)}$.
Then $W_n^{T\text{-}\mathrm{gr}}=\bigoplus_{t_1, \dots, t_n \in T}
W_{t_1, \dots, t_n}$. Moreover \begin{equation}\label{EqDecompWnGradt1t2tn}
\frac{W^{T\text{-}\mathrm{gr}}_n}{W^{T\text{-}\mathrm{gr}}_n \cap \Id^{T\text{-}\mathrm{gr}}(A)} \cong 
\bigoplus_{t_1, \dots, t_n \in T}
\frac{W_{t_1, \dots, t_n}}{W_{t_1, \dots, t_n} \cap \Id^{T\text{-}\mathrm{gr}}(A)}
\end{equation} since we can pick out the component corresponding
to a given $W_{t_1, \dots, t_n}$ in a graded polynomial identity
by putting $x^{(t)}_i = 0$ for all $1\leqslant i \leqslant n$
and $t \ne t_i$. Note that~\eqref{EqDecompWnGradt1t2tn} holds over a field of any characteristic.

 Let $\bar f_1, \dots, \bar f_{c_n(A)}$ be 
 a basis in $\frac{W_n}{W_n \cap \Id(A)}$ where $f_i\in W_n$.
Then for every monomial $w=x_{\sigma(1)}\cdots x_{\sigma(n)}$ (with some arrangement of brackets), $\sigma\in S_n$,
there exist $\alpha_{w,i}\in F$ such that $$x_{\sigma(1)}\cdots x_{\sigma(n)} - \sum_{i=1}^{c_n(A)}\alpha_{w,i} f_i(x_1,\dots, x_n) \in \Id(A).$$
For every choice of $t_1, \dots, t_n \in T$ we replace $x_i$ with $x^{\left(t_i\right)}_i$
and get $$x^{\left(t_{\sigma(1)}\right)}_{\sigma(1)}\cdots x^{\left(t_{\sigma(n)}\right)}_{\sigma(n)} - \sum_{i=1}^{c_n(A)}\alpha_{w,i} f_i\left(x^{(t_1)}_1,\dots, x^{(t_n)}_n\right) \in \Id^{T\text{-}\mathrm{gr}}(A)$$
and $$\frac{W^{T\text{-}\mathrm{gr}}_n}{W^{T\text{-}\mathrm{gr}}_n \cap \Id^{T\text{-}\mathrm{gr}}(A)}
=\left\langle \bar f_i\left(x^{(t_1)}_1,\dots, x^{(t_n)}_n\right)\mathrel{\Bigl|}
1\leqslant i \leqslant c_n(A),\  t_1, \dots, t_n \in T\right\rangle_F.$$
This implies the upper bound for $c_n^{T\text{-}\mathrm{gr}}(A)$ in the case when $T$ is finite.

In order to get the lower bound for $c_n^{T\text{-}\mathrm{gr}}(A)$, for a given $n$-tuple $(t_1, \dots, t_n) \in T^n$ we consider the map $\varphi_{t_1, \dots, t_n} \colon W_n \rightarrow \frac{W^{T\text{-}\mathrm{gr}}_n}{W^{T\text{-}\mathrm{gr}}_n \cap \Id^{T\text{-}\mathrm{gr}}(A)}$
where $\varphi_{t_1, \dots, t_n}(f)$ is the image of $f\left(x^{(t_1)}_1, \dots, x^{(t_n)}_n\right)$
in $ \frac{W^{T\text{-}\mathrm{gr}}_n}{W^{T\text{-}\mathrm{gr}}_n \cap \Id^{T\text{-}\mathrm{gr}}(A)}$ for $f=f(x_1, \dots, x_n) \in W_n$.
The multilinearity of $f$ implies that $f(x_1, \dots, x_n) \equiv 0$ is an ordinary polynomial
identity if and only if $$f\left(x^{(t_1)}_1, \dots, x^{(t_n)}_n\right) \equiv 0 $$ is a graded polynomial identity for every $t_1, \dots, t_n \in T$. In other words, $W_n \cap \Id(A)
= \bigcap\limits_{(t_1, \dots, t_n) \in T^n} \ker \varphi_{t_1, \dots, t_n}$.
Since $W_n$ is a finite dimensional vector space, there exists a finite subset $\Lambda \subseteq T^n$
such that $W_n \cap \Id(A)
= \bigcap\limits_{(t_1, \dots, t_n) \in \Lambda} \ker \varphi_{t_1, \dots, t_n}$.

Consider the
  embedding $$W_n \hookrightarrow
 W_n^{T\text{-}\mathrm{gr}}=\bigoplus_{t_1, \dots, t_n \in T}
W_{t_1, \dots, t_n}$$ where the image of $f(x_1, \dots, x_n)\in W_n$ equals $\sum_{(t_1, \dots, t_n) \in \Lambda} f\left(x^{(t_1)}_1, \dots, x^{(t_n)}_n\right)$. 
Then~\eqref{EqDecompWnGradt1t2tn} and our choice of $\Lambda$ imply that the induced map $\frac{W_n}{W_n \cap \Id(A)} \hookrightarrow \frac{W^{T\text{-}\mathrm{gr}}_n}{W^{T\text{-}\mathrm{gr}}_n \cap \Id^{T\text{-}\mathrm{gr}}(A)}$ is an embedding and the lower bound follows.
\end{proof}

The limit  $\PIexp^{T\text{-}\mathrm{gr}}(A):=\lim\limits_{n\rightarrow\infty} \sqrt[n]{c^{T\text{-}\mathrm{gr}}_n(A)}$ (if it exists) is called the \textit{graded PI-exponent} of $A$.

In Example~\ref{ExampleGr} we have shown that each $T$-graded algebra $A$ with a finite support is
an algebra with a generalized $F^T$-action.
The lemma below shows that instead of studying graded codimensions and cocharacters of $A$
we can study codimensions and cocharacters of its polynomial $F^T$-identities.

\begin{lemma}\label{LemmaCnGrCnGenH}
Let $\Gamma \colon A=\bigoplus_{t\in T} A^{(t)}$ be a grading on 
an algebra $A$ over a field $F$ by a set $T$ such that $\supp \Gamma$ is finite. Then $c_n^{T\text{-}\mathrm{gr}}(A)=c_n^{F^T}(A)$ and
$\chi_n^{T\text{-}\mathrm{gr}}(A)=\chi_n^{F^T}(A)$ for all $n\in \mathbb N$.
 If, in addition, $\ch F = 0$, we have $\ell_n^{T\text{-}\mathrm{gr}}(A)=\ell_n^{F^T}(A)$.
\end{lemma}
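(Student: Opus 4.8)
The plan is to construct an explicit linear isomorphism between the multilinear graded polynomial spaces (modulo identities) and the multilinear $F^T$-polynomial spaces (modulo identities), and then check that it is $S_n$-equivariant. The bridge is the basis $(h_t)_{t\in \supp\Gamma}$ of the finite-dimensional algebra $F^T$ singled out in Example~\ref{ExampleGr}: since $\supp\Gamma$ is finite, $F^T$ effectively reduces to the finite product $\prod_{t\in\supp\Gamma} F$, and the idempotents $h_t$ project onto the homogeneous components. First I would define, on the level of free algebras, a map sending the graded monomial $x^{(t_1)}_{i_1}\cdots x^{(t_n)}_{i_n}$ (with a fixed bracketing) to the $F^T$-monomial $x^{h_{t_1}}_{i_1}\cdots x^{h_{t_n}}_{i_n}$ with the same bracketing, and conversely sending $x^{g_1}_{i_1}\cdots x^{g_n}_{i_n}$ for $g_j\in F^T$ to $\sum_{t_1,\dots,t_n\in\supp\Gamma} g_1(t_1)\cdots g_n(t_n)\, x^{(t_1)}_{i_1}\cdots x^{(t_n)}_{i_n}$, using $g=\sum_{t\in\supp\Gamma} g(t)h_t$ on the relevant components. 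These are mutually inverse linear bijections $W^{T\text{-}\mathrm{gr}}_n \leftrightarrow W^{F^T}_n$ once we agree that the $h_t$ with $t\notin\supp\Gamma$ act as zero, which we may, since the whole point is to compare evaluations on $A$.

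The heart of the matter is then to show these maps carry $\Id^{T\text{-}\mathrm{gr}}(A)\cap W^{T\text{-}\mathrm{gr}}_n$ onto $\Id^{F^T}(A)\cap W^{F^T}_n$. Here I would argue via evaluations. A graded substitution $x^{(t_j)}_{i_j}\mapsto a_j\in A^{(t_j)}$ corresponds exactly to the $F^T$-substitution $x_{i_j}\mapsto a_j$ followed by applying $h_{t_j}$: indeed $h_{t_j}a_j = a_j$ if $a_j\in A^{(t_j)}$ and $h_{t_j}$ kills the other components. Conversely, every evaluation of an $F^T$-polynomial decomposes over the homogeneous components of the inputs, because $F^T$ acts diagonally with respect to the grading: writing each input $b_k\in A$ as $b_k=\sum_{t\in\supp\Gamma} b_k^{(t)}$ and expanding, a multilinear $F^T$-polynomial's value is a sum of its values on homogeneous tuples, and on such a tuple the $F^T$-action through each $h$ reduces to scalar multiplication by $h(t)$. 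Thus $f$ is a graded identity of $A$ iff its image is an $F^T$-identity of $A$; multilinearity makes this a clean finite bookkeeping rather than anything subtle. This gives $c_n^{T\text{-}\mathrm{gr}}(A)=c_n^{F^T}(A)$.

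For the cocharacter (and hence colength in characteristic $0$) statement, I would verify that the bijection $W^{T\text{-}\mathrm{gr}}_n/(W^{T\text{-}\mathrm{gr}}_n\cap\Id^{T\text{-}\mathrm{gr}}(A)) \cong W^{F^T}_n/(W^{F^T}_n\cap\Id^{F^T}(A))$ intertwines the two $S_n$-actions. This is essentially immediate from the definitions: in both settings $S_n$ permutes the indices $i_1,\dots,i_n$ of the variables while leaving the decorations ($(t_j)$ resp.\ $h_{t_j}$) attached to their positions, and our correspondence matches decoration to decoration position by position. Hence the two $FS_n$-modules are isomorphic, their characters coincide, and by decomposing into irreducibles the colengths agree when $\ch F=0$. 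The one place requiring a little care---the main (though minor) obstacle---is bookkeeping the passage between ``$F^T$ with $T$ possibly infinite'' and ``the finite sub-bookkeeping indexed by $\supp\Gamma$'': one must check that components $h_t$ with $t\notin\supp\Gamma$ genuinely contribute nothing to any evaluation on $A$ and that the partial operation $\star$ of Example~\ref{ExampleGr} never leads outside $\supp\Gamma$ when both factors are in the support, so that the correspondence is well-defined on the free algebras and respects the identities. Once this is isolated, everything else is the routine multilinearization argument sketched above.
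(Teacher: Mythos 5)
Your proposal is correct and takes essentially the same route as the paper, which defines the homomorphisms $\xi(x_i^h)=\sum_{t\in\supp\Gamma}h(t)x_i^{(t)}$ and $\eta\bigl(x_i^{(t)}\bigr)=x_i^{h_t}$, checks via evaluations that each carries identities to identities, and verifies that the induced maps on the quotients are mutually inverse using the relation $x_i^h-\sum_{t\in\supp\Gamma}h(t)x_i^{h_t}\in\Id^{F^T}(A)$. The only imprecision~--- your two maps are mutually inverse between $W_n^{T\text{-}\mathrm{gr}}$ and $W_n^{F^T}$ only after passing to the quotients modulo identities, since $W_n^{F^T}$ is strictly larger when $T$ is infinite~--- is one you already flag and correctly resolve.
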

\begin{proof} 
Let $$\xi \colon F\lbrace X | F^T \rbrace \to F\lbrace X^{T\text{-}\mathrm{gr}} \rbrace$$ be the algebra homomorphism  defined by $\xi(x_i^h) = \sum\limits_{t\in\supp \Gamma} h(t)x^{(t)}_i$, $i\in\mathbb N$, $h\in F^T$. Let $f\in \Id^{F^T}(A)$. Consider an arbitrary homomorphism $\psi \colon  
F\lbrace X^{T\text{-}\mathrm{gr}} \rbrace \to A$
such that $\psi(x^{(t)}_i)\in A^{(t)}$
for all $t\in T$ and $i\in\mathbb N$. Then the algebra homomorphism $\psi\xi \colon F\lbrace X | F^T \rbrace
\to A$ satisfies the condition $$\psi\xi(x_i^h)=\sum\limits_{t\in\supp \Gamma} h(t)\psi\left(x^{(t)}_i\right)=
h\left(\sum\limits_{t\in\supp \Gamma} \psi\left(x^{(t)}_i\right)\right)=h\,\psi\xi(x_i).$$ Thus $\psi\xi(f) =0$ and $\xi(f)\in \Id^{T\text{-}\mathrm{gr}}(A)$. Hence $\xi\left(\Id^{F^T}(A)\right)\subseteq \Id^{T\text{-}\mathrm{gr}}(A)$.
Denote by $$\tilde \xi \colon F\lbrace X | F^T \rbrace/\Id^{F^T}(A) \to F\lbrace X^{T\text{-}\mathrm{gr}} \rbrace/\Id^{T\text{-}\mathrm{gr}}(A)$$ the homomorphism induced by $\xi$.

Let $$\eta \colon F\lbrace X^{T\text{-}\mathrm{gr}} \rbrace \to F\lbrace X | F^T \rbrace$$
be the algebra homomorphism defined by $\eta\left(x^{(t)}_i\right) = x^{h_t}_i$ for all $i\in \mathbb N$
and $t\in T$. Consider an arbitrary graded polynomial identity $f\in \Id^{T\text{-}\mathrm{gr}}(A)$.
Let $\psi \colon  F\lbrace X | F^T \rbrace \to A$ be a homomorphism satisfying the condition
$\psi(x_i^h)=h\psi(x_i)$ for every $i\in\mathbb N$ and $h\in F^T$.
Then for any $i\in\mathbb N$ and $g, t \in T$ we have
$$h_g \psi\eta\left(x^{(t)}_i\right) = h_g\psi(x^{h_t}_i)=h_g h_t \psi(x_i)
=\left\lbrace \begin{array}{lll} 0 & \text{ if } & g\ne t,\\
                              \psi\eta\left(x^{(t)}_i\right) & \text{ if } & g=t. \end{array}\right.$$
 Thus $\psi\eta\left(x^{(t)}_i\right) \in A^{(t)}$.
 Therefore, $\psi\eta(f)=0$ and $\eta(\Id^{T\text{-}\mathrm{gr}}(A)) \subseteq \Id^{F^T}(A)$.
Denote by $\tilde\eta \colon  F\lbrace X^{T\text{-}\mathrm{gr}} \rbrace/\Id^{T\text{-}\mathrm{gr}}(A) \to
F\lbrace X | F^T \rbrace/\Id^{F^T}(A)$ the induced homomorphism.

Below we use the notation $\bar f = f + \Id^{F^T}(A) \in F\lbrace X | F^T \rbrace/\Id^{F^T}(A)$ for $f\in
F\lbrace X | F^T \rbrace$ and  $\bar f = f + \Id^{T\text{-}\mathrm{gr}}(A) \in F\lbrace X^{T\text{-}\mathrm{gr}} \rbrace/\Id^{T\text{-}\mathrm{gr}}(A)$ for $f\in F\lbrace X^{T\text{-}\mathrm{gr}} \rbrace$.
Observe that $$x^h_i - \sum\limits_{t\in\supp \Gamma} h(t) x^{h_t}_i\in \Id^{F^T}(A)$$ for every $h\in F^T$ and $i\in\mathbb N$.
Hence $$\tilde\eta\tilde\xi\left(\bar x^h_i\right)=\tilde\eta\left(
\sum\limits_{t\in\supp \Gamma} h(t) \bar x^{(t)}_i\right)
=\sum\limits_{t\in\supp \Gamma} h(t) \bar x^{h_t}_i = \bar x^h_i$$
for every $h\in F^T$ and $i\in\mathbb N$. 
Thus $\tilde\eta\tilde\xi=\id_{F\lbrace X | F^T \rbrace/\Id^{F^T}(A)}$
since $F\lbrace X | F^T \rbrace/\Id^{F^T}(A)$ is generated by $\bar x^h_i$ where $h\in F^T$ and $i\in\mathbb N$.
Moreover $\tilde\xi\tilde\eta\left(\bar x^{(t)}_i\right)=
\tilde\xi\left(\bar x^{h_t}_i\right)=\bar x^{(t)}_i$ for every $t\in \supp \Gamma$ and $i\in \mathbb N$.
Therefore, $\tilde\xi\tilde\eta=\id_{F\lbrace X^{T\text{-}\mathrm{gr}} \rbrace/\Id^{T\text{-}\mathrm{gr}}(A)}$
and $F\lbrace X^{T\text{-}\mathrm{gr}} \rbrace/\Id^{T\text{-}\mathrm{gr}}(A) \cong F\lbrace X | F^T \rbrace/\Id^{F^T}(A)$
as algebras. The restriction of $\tilde\xi$ provides the isomorphism of the $FS_n$-modules
$\frac{W^{F^T}_n}{W^{F^T}_n \cap \Id^{F^T}(A)}$ and $\frac{W^{T\text{-}\mathrm{gr}}_n}{W^{T\text{-}\mathrm{gr}}_n\cap \Id^{T\text{-}\mathrm{gr}}(A)}$.  Hence $$c^{F^T}_n(A)=\dim \frac{W^{F^T}_n}{W^{F^T}_n \cap \Id^{F^T}(A)}
= \dim\frac{W^{T\text{-}\mathrm{gr}}_n}{W^{T\text{-}\mathrm{gr}}_n\cap \Id^{T\text{-}\mathrm{gr}}(A)}=c^{T\text{-}\mathrm{gr}}_n(A)$$
and  $\chi_n^{T\text{-}\mathrm{gr}}(A)=\chi_n^{F^T}(A)$
for all $n\in \mathbb N$.
 If, in addition, $\ch F = 0$, we have $\ell_n^{T\text{-}\mathrm{gr}}(A)=\ell_n^{F^T}(A)$.
\end{proof}

\begin{remark}
Again, analogously to Remark~\ref{RemarkHAssNAssCodimTheSame}, in the case when $A$ is an associative or Lie algebra, one can use, respectively, free associative or Lie graded algebras, however the graded codimensions will be the same.
\end{remark}

  \section{Upper bound for $H$-colengths}\label{SectionUpperHSimple}

Throughout Sections~\ref{SectionUpperHSimple} and~\ref{SectionHPIexpExistHSimple}
we assume that the characteristic of the base field $F$ is $0$.

In~\cite[Theorem~1]{ZaiMishchGiaIntermediate}, A.~Giambruno, S.\,P.~Mishchenko, and M.\,V.~Zaicev
proved that \begin{equation*}
\ell_n(A)=\sum_{\lambda \vdash n} m(A,\lambda)
\leqslant (\dim A) (n+1)^{(\dim A)^2+\dim A}
\end{equation*}
 for all $n\in \mathbb N$.
 
 It turns out that for $H$-codimensions of finite dimensional algebras with a generalized $H$-action
 we have the same upper bound (Theorem~\ref{TheoremUpperBoundColengthHNAssoc} below).

Let $A$ be a finite dimensional algebra with a generalized $H$-action for some associative algebra $H$ with $1$.

\begin{lemma}\label{LemmaHTensorCommutative} Let $C$ be a unital commutative associative algebra over $F$. Define on $A\otimes C$
the structure of an algebra with a generalized $H$-action by $h(a \otimes c)
:= ha \otimes c$ for $a\in A$ and $c\in C$. Then $\Id^H(A\otimes C)=\Id^H(A)$.
\end{lemma}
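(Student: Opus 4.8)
The plan is to establish the two inclusions separately. The inclusion $\Id^H(A\otimes C)\subseteq\Id^H(A)$ is immediate: the rule $h(a\otimes c)=ha\otimes c$ does define a generalized $H$-action on $A\otimes C$ (the check just amounts to tensoring~(\ref{EqGeneralizedHopf}) with $C$), and $a\mapsto a\otimes 1$ is an injective algebra homomorphism of $A$ into $A\otimes C$ whose image is an $H$-invariant subalgebra. Composing a map $X\to A$ with this embedding and using the uniqueness and $H$-equivariance of the homomorphic extension, one sees that every polynomial $H$-identity of $A\otimes C$ is a polynomial $H$-identity of $A$.

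For the reverse inclusion I would first reduce to multilinear identities. Since $\ch F=0$, the complete linearization of an $H$-polynomial is a purely formal procedure — one substitutes $x_i^h\mapsto (x_i')^h+(x_i'')^h$, which is legitimate because each $h\in H$ acts linearly — and an $H$-polynomial is an $H$-identity of a given algebra exactly when all multilinear components of its complete linearization are. Applying this both to $A$ and to $A\otimes C$, it is enough to show that every multilinear $f\in W_n^H\cap\Id^H(A)$ lies in $\Id^H(A\otimes C)$.

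So fix such an $f=\sum_m\alpha_m m$, where $m$ ranges over the bracketed monomials $x^{h_1}_{\sigma(1)}\cdots x^{h_n}_{\sigma(n)}$, $\sigma\in S_n$, and take arbitrary $u_1,\dots,u_n\in A\otimes C$. Writing $u_i=\sum_j a_{ij}\otimes c_{ij}$ and using multilinearity, $f(u_1,\dots,u_n)$ becomes a sum of values of $f$ at elements of the form $a_i\otimes c_i$ with $a_i\in A$, $c_i\in C$, so it suffices to evaluate $f$ on such elements. The key point — and the only place where commutativity and associativity of $C$ are used — is that upon evaluating a monomial $m$ at the $a_i\otimes c_i$, each leaf $x^{h_k}_{\sigma(k)}$ becomes $(h_k a_{\sigma(k)})\otimes c_{\sigma(k)}$, and forming the bracketed product in $A\otimes C$ separates the $A$-factors from the $C$-factors: the $A$-component is exactly the value of $m$ in $A$ at $x_i\mapsto a_i$, whereas the $C$-component is a bracketed product of $c_1,\dots,c_n$ which, $C$ being unital, commutative and associative, equals $c_1c_2\cdots c_n$ independently of $\sigma$ and of the bracketing. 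Summing over $m$, we get $f(a_1\otimes c_1,\dots,a_n\otimes c_n)=f(a_1,\dots,a_n)\otimes(c_1\cdots c_n)=0$ because $f\in\Id^H(A)$, so $f\in\Id^H(A\otimes C)$, as required.

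I do not expect a genuine obstacle here; the only mildly delicate points are the compatibility of the linearization procedure with the formal symbols from $H$, and the observation that unitality, commutativity and associativity of $C$ make the $C$-component of a bracketed product well defined independently of the bracketing. (Finite-dimensionality of $A$ is in fact not used in this lemma.)
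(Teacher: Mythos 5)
Your argument is correct and coincides with the paper's: the first inclusion is obtained exactly as you do via the $H$-invariant subalgebra $A\otimes 1$, and the converse inclusion is the standard multilinearization-plus-separation-of-factors argument that the paper simply delegates to \cite[Lemma~1.4.2]{ZaiGia}, which you have written out in full (including the correct observation that commutativity and associativity of $C$ make the $C$-component independent of the order and bracketing).
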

\begin{proof}
Since $C$ is unital, $A\otimes C$ contains an $H$-invariant subalgebra isomorphic to $A$
and therefore $\Id^H(A\otimes C) \subseteq \Id^H(A)$. The proof of the converse inclusion is 
completely analogous to the case of associative algebras without an action~\cite[Lemma 1.4.2]{ZaiGia}.
\end{proof}

Let $a_1, \ldots, a_s$ be a basis in $A$. Fix a number $k\in\mathbb N$.
Denote by $F[\xi_{ij} \mid 1\leqslant i \leqslant s,\ 1\leqslant j \leqslant k ]$
the unital algebra of commutative associative polynomials
in the variables $\xi_{ij}$
with coefficients from $F$.
 The algebra $A \otimes F[\xi_{ij} \mid 1\leqslant i \leqslant s,\ 1\leqslant j \leqslant k ]$
 is again an algebra with a generalized $H$-action via $h(a \otimes f):=ha \otimes f$
 for $a\in A$ and $f\in  F[\xi_{ij} \mid 1\leqslant i \leqslant s,\ 1\leqslant j \leqslant k ]$.
  Denote by $\tilde A_k$ the intersection of all $H$-invariant subalgebras of $A \otimes F[\xi_{ij} \mid 1\leqslant i \leqslant s,\ 1\leqslant j \leqslant k ]$ containing the elements $\xi_j := \sum_{i=1}^s a_i \otimes \xi_{ij}$ where $1\leqslant j \leqslant k$. (The symbols $\xi_j$ represent ``generic elements''
  of the algebra $A$.)
  
  \begin{lemma}\label{LemmaHRelativeK}
  Let $f=f(x_1, \ldots, x_k) \in F\lbrace X | H \rbrace$.
  Then $f\in \Id^H(A)$ if and only if $f(\xi_1, \ldots, \xi_k)=0$
  in $\tilde A_k$.
  \end{lemma}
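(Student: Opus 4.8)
The plan is to reduce the statement to the standard substitution argument used to relate polynomial identities to identities in the generic-coefficient algebra, adapted to the presence of the generalized $H$-action. First I would prove the easy implication: if $f(\xi_1,\dots,\xi_k)=0$ in $\tilde A_k$, then $f\in\Id^H(A)$. Fix arbitrary elements $b_1,\dots,b_k\in A$ and write $b_j=\sum_{i=1}^s\beta_{ij}a_i$ with $\beta_{ij}\in F$. The assignment $\xi_{ij}\mapsto\beta_{ij}$ extends to a unital algebra homomorphism $F[\xi_{ij}]\to F$, hence to an $H$-equivariant algebra homomorphism $\varphi\colon A\otimes F[\xi_{ij}]\to A\otimes F\cong A$ (equivariance is immediate since $h$ acts only on the first tensor factor on both sides). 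Under $\varphi$ we have $\varphi(\xi_j)=\sum_i\beta_{ij}a_i=b_j$, and since $\tilde A_k$ is an $H$-invariant subalgebra containing the $\xi_j$, the restriction of $\varphi$ maps $\tilde A_k$ into the $H$-invariant subalgebra of $A$ generated by $b_1,\dots,b_k$. Because $f$ is built from the $x_j^h$ using the algebra operations and the $H$-action, and $\varphi$ commutes with all of these, $\varphi\bigl(f(\xi_1,\dots,\xi_k)\bigr)=f(b_1,\dots,b_k)$; the left-hand side is $0$ by hypothesis, so $f(b_1,\dots,b_k)=0$. As the $b_j$ were arbitrary, $f\equiv 0$.

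For the converse, suppose $f\in\Id^H(A)$. By Lemma \ref{LemmaHTensorCommutative} applied to the unital commutative associative algebra $C=F[\xi_{ij}\mid 1\leqslant i\leqslant s,\ 1\leqslant j\leqslant k]$, we have $\Id^H(A\otimes C)=\Id^H(A)$, so $f$ is a polynomial $H$-identity of $A\otimes C$ as well. Now substitute $x_j\mapsto\xi_j=\sum_{i=1}^s a_i\otimes\xi_{ij}\in A\otimes C$: this is a particular evaluation of $f$ in $A\otimes C$, hence $f(\xi_1,\dots,\xi_k)=0$ inside $A\otimes C$. Since $\tilde A_k\subseteq A\otimes C$, the equality $f(\xi_1,\dots,\xi_k)=0$ holds in $\tilde A_k$ too. (One should note that $f(\xi_1,\dots,\xi_k)$ does lie in $\tilde A_k$: each $x_j^h$ is sent to $h\xi_j$, which lies in the $H$-invariant subalgebra $\tilde A_k$, and $\tilde A_k$ is closed under the multiplication, so the whole expression stays in $\tilde A_k$; this is exactly the point of defining $\tilde A_k$ as the smallest $H$-invariant subalgebra containing the $\xi_j$.)

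The only subtlety — and the step I would flag as the one requiring care rather than real difficulty — is making sure the $H$-equivariance of the homomorphism $\varphi$ in the first paragraph is genuinely used correctly: a priori $A\otimes C$ carries its generalized $H$-action only through the first factor, and we need that $f$ evaluated via $x_j^h\mapsto h(\text{substituted element})$ is respected. Here it is essential that $\varphi$ is an honest $H$-module homomorphism that is also an algebra homomorphism, so that it commutes with formation of the (non-associative, $H$-decorated) monomials appearing in $f$; since $\varphi$ is induced by a unital algebra map $C\to F$ tensored with $\id_A$, both properties are transparent. Once this is in place, the two implications together give the claimed equivalence.
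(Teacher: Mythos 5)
Your proof is correct and takes essentially the same route as the paper's: the implication from $f\in\Id^H(A)$ is obtained via Lemma~\ref{LemmaHTensorCommutative} applied to $C=F[\xi_{ij}]$, and the converse via the evaluation homomorphism $A\otimes F[\xi_{ij}]\to A$ induced by $\xi_{ij}\mapsto\beta_{ij}$ (which the paper writes as $a\otimes\xi_{ij}\mapsto\alpha_{ij}a$). Your extra remarks on the $H$-equivariance of that map and on why $f(\xi_1,\dots,\xi_k)$ actually lies in $\tilde A_k$ merely make explicit what the paper leaves implicit.
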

  \begin{proof} Lemma~\ref{LemmaHTensorCommutative} implies $$\Id^H(A)=\Id^H(A\otimes F[\xi_{ij} \mid 1\leqslant i \leqslant s,\ 1\leqslant j \leqslant k ])\subseteq \Id^H(\tilde A_k).$$
  In particular, $f\in \Id^H(A)$ implies $f(\xi_1, \ldots, \xi_k)=0$.
  
  Conversely, suppose $f(\xi_1, \ldots, \xi_k)=0$.
We claim that  $f(b_1, \ldots, b_k)=0$ for all $b_j \in A$. Indeed, $b_j=\sum_{i=1}^s \alpha_{ij} a_i$
for some $\alpha_{ij} \in F$.
  Consider the homomorphism $$\varphi \colon A\otimes F[\xi_{ij} \mid 1\leqslant i \leqslant s,\ 1\leqslant j \leqslant k ] \to A$$
  of algebras and $H$-modules defined by $a \otimes \xi_{ij}\mapsto \alpha_{ij} a$ for all $a\in A$. Then 
  $$f(b_1, \ldots, b_k)=f(\varphi(\xi_1),\ldots, \varphi(\xi_k))=\varphi(f(\xi_1, \ldots, \xi_k))=0$$
  and $f\in \Id^H(A)$.   
  \end{proof}
  
    \begin{lemma}\label{LemmaHRelativeKdim}
  Denote by $R_{kn}$ the linear span in $\tilde A_k$ of all products $(h_1 \xi_{i_1})\cdots (h_n \xi_{i_n})$ where $h_j \in H$ and $1\leqslant i_j \leqslant k$ for $1\leqslant j\leqslant n$.
  Then $\dim R_{kn}\leqslant (\dim A) (n+1)^{k\dim A}$ for all $n\in\mathbb N$.
  \end{lemma}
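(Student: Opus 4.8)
The plan is to use the grading of the polynomial ring by total degree in the variables $\xi_{ij}$. Write $s:=\dim A$ and abbreviate $P:=F[\xi_{ij}\mid 1\leqslant i\leqslant s,\ 1\leqslant j\leqslant k]$, graded as $P=\bigoplus_{m\geqslant 0}P^{(m)}$ by total degree, so that $\tilde A_k\subseteq A\otimes P$. First I would note that for any $h\in H$ and any $j$, since $\xi_j=\sum_{i=1}^s a_i\otimes\xi_{ij}$ and the generalized $H$-action on $A\otimes P$ is given by $h(a\otimes f)=ha\otimes f$, we have $h\xi_j=\sum_{i=1}^s (ha_i)\otimes\xi_{ij}\in A\otimes P^{(1)}$; in particular $h\xi_j\in\tilde A_k$, because $\tilde A_k$ is $H$-invariant and contains $\xi_j$.

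Next I would observe that, since each factor $h_m\xi_{i_m}$ lies in $A\otimes P^{(1)}$, any product $(h_1\xi_{i_1})\cdots(h_n\xi_{i_n})$ — with an arbitrary arrangement of brackets, which is irrelevant for this purpose — lies in $A\otimes P^{(n)}$. Therefore $R_{kn}\subseteq A\otimes P^{(n)}$, whence $\dim R_{kn}\leqslant (\dim A)\cdot\dim P^{(n)}$.

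Finally, $P^{(n)}$ is spanned by the monomials of degree $n$ in the $ks$ variables $\xi_{ij}$; each such monomial is determined by its exponent vector $(e_{ij})$, and each entry $e_{ij}$ satisfies $0\leqslant e_{ij}\leqslant n$, so $\dim P^{(n)}\leqslant (n+1)^{ks}$. Combining the two estimates gives $\dim R_{kn}\leqslant (\dim A)(n+1)^{k\dim A}$, which is the assertion. I do not expect any serious obstacle: the whole argument is degree bookkeeping, and the single point deserving care is the homogeneity of $h\xi_j$, which follows at once from the explicit formula for the generalized $H$-action on $A\otimes P$ and is insensitive to the (non-associative) bracketing used in forming the products.
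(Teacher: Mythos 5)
Your proof is correct and follows essentially the same route as the paper: both arguments observe that each factor $h_m\xi_{i_m}$ is homogeneous of degree $1$ in the $\xi_{ij}$, so $R_{kn}$ sits inside $A$ tensored with the degree-$n$ homogeneous component of the polynomial ring, and then bound the number of degree-$n$ monomials in $k\dim A$ variables by $(n+1)^{k\dim A}$. Your explicit check that $h\xi_j=\sum_i (ha_i)\otimes\xi_{ij}$ is a small but welcome addition of detail that the paper leaves implicit.
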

\begin{proof}
The space $R_{kn} \subseteq A\otimes F[\xi_{ij} \mid 1\leqslant i \leqslant s,\ 1\leqslant j \leqslant k ]$  is a subspace of the linear span of elements $a_\ell \otimes \prod\limits_{\substack{
1\leqslant i \leqslant s,\\ 1\leqslant j \leqslant k}} \xi_{ij}^{s_{ij}}$
where $1\leqslant \ell\leqslant s =\dim A$, $s_{ij}\in\mathbb Z_+$, $\sum\limits_{\substack{
1\leqslant i \leqslant s,\\ 1\leqslant j \leqslant k}} s_{ij} = n$.
The number of such elements does not exceed $(\dim A) (n+1)^{k\dim A}$, and we get the desired upper bound
for $\dim R_{kn}$.
\end{proof}

Now we show that all irreducible $FS_n$-submodules that occur in the decomposition
of $\frac{W^H_n}{W^H_n \cap\, \Id^H(A)}$ with nonzero multiplicities,
correspond to Young diagrams of height less than or equal to $\dim A$.
  
  \begin{lemma}\label{LemmaHStripeTheorem}
    Let $\lambda \vdash n$, $n\in\mathbb N$.
    Suppose $\lambda_{(\dim A)+1} > 0$. Then $m(A, H,\lambda)= 0$.
  \end{lemma}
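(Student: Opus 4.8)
The plan is to reduce the vanishing of the multiplicity $m(A,H,\lambda)$ to a dimension count in the relatively free algebra $\tilde A_k$, exploiting the identification from Lemmas~\ref{LemmaHRelativeK} and~\ref{LemmaHRelativeKdim}. Suppose $\lambda\vdash n$ with $\lambda_{(\dim A)+1}>0$, so the Young diagram $D_\lambda$ has at least $(\dim A)+1$ rows. Assume for contradiction that $m(A,H,\lambda)>0$. Then there is a Young tableau $T_\lambda$ and a multilinear $H$-polynomial $g\in W_n^H$ such that $e_{T_\lambda}\bar g\neq 0$ in $\frac{W_n^H}{W_n^H\cap\Id^H(A)}$; equivalently, the polynomial $e_{T_\lambda}g$ is not an $H$-identity of $A$. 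Write $e_{T_\lambda}=a_{T_\lambda}b_{T_\lambda}$, where $b_{T_\lambda}$ alternates the variables in each column of $T_\lambda$. Since the first column of $D_\lambda$ has height $h_1=\lambda'_1\geqslant (\dim A)+1$, the polynomial $b_{T_\lambda}g$ — and hence $e_{T_\lambda}g$ — is alternating in a set of $(\dim A)+1$ of the variables $x_1,\dots,x_n$.

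First I would substitute, via Lemma~\ref{LemmaHRelativeK} with $k=n$, the generators $\xi_1,\dots,\xi_n$ of $\tilde A_n$ for $x_1,\dots,x_n$: the polynomial $e_{T_\lambda}g$ is not an $H$-identity of $A$ iff $(e_{T_\lambda}g)(\xi_1,\dots,\xi_n)\neq 0$ in $\tilde A_n$. Now every $H$-polynomial value $f(\xi_1,\dots,\xi_n)$ built from the $\xi_j$ lies in the span $R_{n,n}$ of products $(h_1\xi_{i_1})\cdots(h_n\xi_{i_n})$, and inside $A\otimes F[\xi_{ij}]$ each $h\xi_j=\sum_i a_i'\otimes\xi_{ij}$ is a $C$-linear combination (with $C=F[\xi_{ij}]$) of the finitely many elements $a_1\otimes 1,\dots,a_s\otimes 1$, $s=\dim A$. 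Hence, after evaluating, the result is a sum of terms each of which is ($C$-linearly) a product whose tensor-factor part is drawn from an $s$-element set, where $s=\dim A$. The key point is then the standard alternation argument: because $e_{T_\lambda}g$ is alternating in $(\dim A)+1=s+1$ of its arguments, and those $s+1$ arguments are specialised to $\xi_{j_0},\dots,\xi_{j_s}$ whose $H$-transforms all live in the $s$-dimensional $F$-subspace $A\otimes 1$ of $A\otimes C$ (tensored up over the commutative ring $C$), every monomial that survives must contain two of these alternated slots carrying the \emph{same} basis vector $a_i$; alternation then forces that monomial to cancel. Concretely, expanding $(e_{T_\lambda}g)(\xi_{j_0},\dots,\xi_{j_s},\dots)$ multilinearly in $\xi_{j_0},\dots,\xi_{j_s}$ and using~(\ref{EqGeneralizedHopf}) to push all $H$-symbols onto the individual letters, each resulting monomial has the $\xi_{j_0},\dots,\xi_{j_s}$-letters replaced by $H$-images of themselves, each of which is a $C$-combination of $a_1\otimes1,\dots,a_s\otimes1$; multilinearity plus the antisymmetrisation $b_{T_\lambda}$ in these $s+1$ slots yields an alternating $C$-multilinear function of $s+1$ vectors in an $s$-dimensional space, which is identically zero. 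Therefore $(e_{T_\lambda}g)(\xi_1,\dots,\xi_n)=0$ in $\tilde A_n$, so $e_{T_\lambda}g\in\Id^H(A)$, contradicting $e_{T_\lambda}\bar g\neq 0$. Hence $m(A,H,\lambda)=0$.

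The main obstacle is making rigorous the claim that after applying the generalized $H$-action and evaluating at the $\xi_j$, the antisymmetrised polynomial becomes an alternating $C$-multilinear form in $s+1$ vectors lying in an $s$-dimensional $F$-space, so that it vanishes. The subtlety is that the $H$-action on $A\otimes C$ fixes the $C$-component (by definition $h(a\otimes c)=ha\otimes c$), and that~(\ref{EqGeneralizedHopf}) only relocates symbols among letters without ever producing a letter outside $A\otimes 1$ once we start from the generators $\xi_j$ expanded in the basis $a_i\otimes 1$; one must check that the column-antisymmetriser $b_{T_\lambda}$, which permutes whole variables $x_j$ (and hence whole generators $\xi_j$, including all their $H$-decorations), indeed acts as simultaneous antisymmetrisation on the corresponding $s+1$ vector slots after this expansion. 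Granting that bookkeeping — which is exactly the generalized-$H$-action analogue of the classical ``if $\lambda$ has more than $\dim A$ rows then the multiplicity is zero'' argument for ordinary identities, combined with the relatively-free-algebra reduction of Lemma~\ref{LemmaHRelativeK} — the conclusion follows.
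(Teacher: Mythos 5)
Your overall strategy is sound and reaches the right conclusion, but it contains one incorrect intermediate claim and one unnecessary detour. The flaw: you assert that since $b_{T_\lambda}g$ is alternating in the variables of each column, ``hence $e_{T_\lambda}g$'' is alternating in a set of $(\dim A)+1$ of the variables. With the paper's convention $e_{T_\lambda}=a_{T_\lambda}b_{T_\lambda}$, the factor acting \emph{last} on $g$ is the row symmetrizer $a_{T_\lambda}$, so $e_{T_\lambda}g=\sum_{\pi\in R_{T_\lambda}}\pi\,(b_{T_\lambda}g)$ is in general \emph{not} alternating in the first column's variables: the row permutations destroy that symmetry. This is precisely why the paper works with the other Young symmetrizer $e^{*}_{T_\lambda}=b_{T_\lambda}a_{T_\lambda}$ (which generates an isomorphic module and detects the same multiplicity), for which the column antisymmetrization is outermost and the alternation claim is literally true. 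Your argument is rescued either by switching to $e^{*}_{T_\lambda}$, or by observing that each individual summand $\pi(b_{T_\lambda}g)$ is alternating in the set $\pi(C_1)$ of $(\dim A)+1$ variables, where $C_1$ is the first column, so each summand separately is an $H$-identity; but as written the ``hence'' is false and needs this repair.

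Apart from that, the route through $\tilde A_n$, the span $R_{nn}$, and alternating $C$-multilinear forms over $C=F[\xi_{ij}]$ is correct --- the $H$-action and the multiplication on $A\otimes C$ are indeed $C$-linear and $C$-bilinear, so an antisymmetric $C$-multilinear function of $s+1$ elements of the free rank-$s$ $C$-module $A\otimes C$ vanishes --- but it buys you nothing here. Since the polynomial is multilinear it suffices to substitute basis elements of $A$ directly, and a polynomial alternating in more than $\dim A$ variables then always receives two equal entries in the alternated slots, hence vanishes; that one sentence is essentially the paper's entire proof. Lemmas~\ref{LemmaHRelativeK} and~\ref{LemmaHRelativeKdim} are needed later, for the multiplicity bound in Theorem~\ref{TheoremUpperBoundColengthHNAssoc}, not for this lemma.
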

  \begin{proof}
    It is sufficient to prove that $e^{*}_{T_\lambda} f \in \Id^H(A)$
    for all $f\in W_n^H$.
Fix some basis of $A$.
Since polynomials are multilinear, it is
sufficient to substitute only basis elements.
 Note that
$e^{*}_{T_\lambda} = b_{T_\lambda} a_{T_\lambda}$
where $b_{T_\lambda}$ alternates the variables of each column
of $T_\lambda$. Hence if we make a substitution and $
e^{*}_{T_\lambda} f$ does not vanish, this implies
 that different basis elements
are substituted for the variables of each column.
But if $\lambda_{(\dim A)+1} > 0$, then the length of the first
 column is greater
than $\dim A$. Therefore,
 $e^{*}_{T_\lambda} f \in \Id^H(A)$.
  \end{proof}

  Now we can prove the main result of this section.
  
  \begin{theorem}\label{TheoremUpperBoundColengthHNAssoc}
  Let $A$ be a finite dimensional algebra with a generalized $H$-action for some associative algebra $H$ with $1$ over a field $F$ of characteristic $0$.
  Then $$\ell_n^H(A) \leqslant (\dim A) (n+1)^{(\dim A)^2+\dim A}$$ for all $n\in\mathbb N$.
  \end{theorem}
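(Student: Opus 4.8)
The plan is to bound the colength $\ell_n^H(A)$ by combining Lemma~\ref{LemmaHStripeTheorem}, which restricts the relevant partitions to those with at most $\dim A$ rows, with a dimension count of the multiplicity space of each such partition inside $\frac{W_n^H}{W_n^H\cap\Id^H(A)}$. Recall the standard fact from $S_n$-representation theory that for a partition $\lambda\vdash n$ with Young tableau $T_\lambda$, the multiplicity $m(A,H,\lambda)$ equals the dimension of $e_{T_\lambda}^{*}\bigl(\frac{W_n^H}{W_n^H\cap\Id^H(A)}\bigr)$ divided by $\deg\chi(\lambda)$; in particular $m(A,H,\lambda)$ is at most the dimension of the space of values $\{e_{T_\lambda}^{*}\bar f \mid f\in W_n^H\}$. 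So it suffices to bound, for each $\lambda$ with $\lambda_{(\dim A)+1}=0$, the dimension of this space of values and then sum over all such $\lambda$.

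First I would replace $A$ by the relative construction $\tilde A_k$ with $k=n$, invoking Lemma~\ref{LemmaHRelativeK}: an $H$-polynomial $f(x_1,\dots,x_n)$ lies in $\Id^H(A)$ if and only if $f(\xi_1,\dots,\xi_n)=0$ in $\tilde A_n$. Hence the multilinear component $\frac{W_n^H}{W_n^H\cap\Id^H(A)}$ is isomorphic, as an $S_n$-module, to the span of the evaluations $w(\xi_1,\dots,\xi_n)$ of multilinear $H$-monomials, and this span sits inside $R_{nn}$ in the notation of Lemma~\ref{LemmaHRelativeKdim} (a product of $n$ factors each of the form $h_j\xi_{i_j}$). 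By Lemma~\ref{LemmaHRelativeKdim}, $\dim R_{nn}\leqslant(\dim A)(n+1)^{n\dim A}$. The point of passing to $\tilde A_n$ rather than working with $W_n^H/(W_n^H\cap\Id^H(A))$ directly is that the former is realized concretely as a bounded-dimensional space of polynomials, so all evaluations of $S_n$-translates and Young symmetrizers still land there.

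Now fix $\lambda\vdash n$ with at most $d:=\dim A$ nonzero rows. The $e_{T_\lambda}^{*}$-isotypic piece of $\frac{W_n^H}{W_n^H\cap\Id^H(A)}$, transported into $\tilde A_n$, is spanned by elements of the form $e_{T_\lambda}^{*}w(\xi_1,\dots,\xi_n)$. Since $b_{T_\lambda}$ alternates the variables in each column of $T_\lambda$ and, as in the proof of Lemma~\ref{LemmaHStripeTheorem}, each column has length at most $d$, every such element can be rewritten as an alternating sum over substitutions of \emph{distinct} $\xi$'s into the at most $d$ rows of $T_\lambda$; equivalently, after collecting, such a value depends on the $\xi_j$'s only through the at most $d$ distinct "row variables". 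Thus the whole multiplicity space for $\lambda$ embeds into the analogue of $R_{dn}$ with only $d$ variables, which by Lemma~\ref{LemmaHRelativeKdim} has dimension at most $(\dim A)(n+1)^{(\dim A)^2}$. Therefore $m(A,H,\lambda)\leqslant(\dim A)(n+1)^{(\dim A)^2}$ for every admissible $\lambda$.

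Finally, the number of partitions $\lambda\vdash n$ with at most $d$ rows is at most $(n+1)^d$, since such a $\lambda$ is determined by $\lambda_1\geqslant\dots\geqslant\lambda_d\geqslant 0$ with each $\lambda_i\leqslant n$. By Lemma~\ref{LemmaHStripeTheorem}, $m(A,H,\lambda)=0$ for all other $\lambda$. Summing,
$$\ell_n^H(A)=\sum_{\lambda\vdash n}m(A,H,\lambda)\leqslant (n+1)^{\dim A}\cdot(\dim A)(n+1)^{(\dim A)^2}=(\dim A)(n+1)^{(\dim A)^2+\dim A},$$
which is the claimed bound. The main obstacle I anticipate is the bookkeeping in the third step: one has to be careful that, after the column-alternation collapses each column to distinct basis-element substitutions, the resulting values genuinely lie in a $d$-variable version of $R_{dn}$ — i.e. that the alternation kills all monomials using more than $d$ distinct $\xi$-indices and the survivors reorganize into products of $d$ generic elements — so that Lemma~\ref{LemmaHRelativeKdim} applies with $k=d$ rather than $k=n$. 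This is exactly the mechanism behind the ordinary estimate~(\ref{EqOrdinaryColength}) of Giambruno--Mishchenko--Zaicev, and the generalized $H$-action changes nothing essential here because the $H$-symbols are simply absorbed into the coefficients of the relative algebra $\tilde A_n$.
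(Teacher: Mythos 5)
Your overall skeleton (Lemma~\ref{LemmaHStripeTheorem} to restrict to partitions of height at most $d:=\dim A$, a per-partition multiplicity bound via the relative algebra of Lemmas~\ref{LemmaHRelativeK} and~\ref{LemmaHRelativeKdim}, then a count of admissible partitions) is exactly the paper's, and your first, second and fourth steps are fine, modulo the harmless slip that $m(A,H,\lambda)$ equals $\dim e^{*}_{T_\lambda}\bigl(W_n^H/(W_n^H\cap\Id^H(A))\bigr)$ on the nose, with no division by $\deg\chi(\lambda)$. The gap is in your third step, and it is the crucial one. Embedding the multilinear quotient into $R_{nn}$ via $x_j\mapsto\xi_j$ only gives $\dim R_{nn}\leqslant(\dim A)(n+1)^{n\dim A}$, which is superexponential in $n$ and useless; the whole theorem hinges on replacing $k=n$ by $k\leqslant d$. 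The mechanism you propose for this reduction~--- that the column alternation $b_{T_\lambda}$ ``kills all monomials using more than $d$ distinct $\xi$-indices''~--- is not correct: the columns of $T_\lambda$ have length at most $d$, but there are up to $\lambda_1$ of them and they involve pairwise disjoint sets of variables, so $e^{*}_{T_\lambda}w(\xi_1,\dots,\xi_n)$ genuinely depends on all $n$ generic elements. Alternation within a column forces distinct basis elements to be substituted into that column (which is exactly how Lemma~\ref{LemmaHStripeTheorem} bounds the height), but it does not collapse the $n$ variables down to $d$.

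The correct mechanism, which the paper uses, is row symmetry plus restitution: work with $e_{T_\lambda}=a_{T_\lambda}b_{T_\lambda}$, so that each $e_{T_\lambda}f_\ell$ is symmetric in the variables of each row of $T_\lambda$; in characteristic $0$ a multilinear polynomial symmetric in a set of variables is an identity if and only if the polynomial obtained by identifying those variables with a single variable is an identity. Identifying all variables of row $i$ to one variable $x_i$ turns $g=\sum_\ell\alpha_\ell e_{T_\lambda}f_\ell$ into a polynomial $\tilde g$ in $k\leqslant d$ variables, and Lemma~\ref{LemmaHRelativeK} places $\tilde g(\xi_1,\dots,\xi_k)$ in $R_{kn}$, where Lemma~\ref{LemmaHRelativeKdim} forces a linear dependence as soon as $m>(\dim A)(n+1)^{k\dim A}$, giving $m(A,H,\lambda)\leqslant(\dim A)(n+1)^{(\dim A)^2}$. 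Note that this argument does not go through with $e^{*}_{T_\lambda}$ as you set it up, because $e^{*}_{T_\lambda}f=b_{T_\lambda}(a_{T_\lambda}f)$ is no longer symmetric in the row variables once $b_{T_\lambda}$ has been applied. If you switch to $e_{T_\lambda}$ and replace the column-alternation argument by this restitution argument, your proof becomes the paper's.
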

  \begin{proof}
Fix for each partition $\lambda \vdash n$ a Young tableux $T_\lambda$
of the shape $\lambda$.
Then for $\lambda, \mu \vdash n$ we have $e_{T_\lambda} FS_n e_{T_\mu} = \left\lbrace\begin{array}{ccc}
Fe_{T_\lambda} &\text{ if } & \lambda = \mu,\\
0 & \text{ if } & \lambda \ne \mu.
\end{array} \right.$ (See e.g.~\cite[{Lemma~4.23 and Exercise~4.24}]{FultonHarris}.)
Hence the multiplicity $m(A,H,\lambda)$ of $M(\lambda)=FS_n e_{T_\lambda}$
in $\frac{W^H_n}{W^H_n \cap\, \Id^H(A)}$ equals $\dim e_{T_\lambda} \frac{W^H_n}{W^H_n \cap\, \Id^H(A)}$.
In other words, $m(A,H,\lambda)$ equals the maximal number $m$ of $H$-polynomials $f_1,\ldots, f_m \in W_n^H$ such that $g=\alpha_1 e_{T_\lambda}f_1 + \dots \alpha_m e_{T_\lambda}f_m \in \Id^H(A)$
for some $\alpha_\ell\in F$ always implies $\alpha_1=\ldots = \alpha_m = 0$.
Denote by $k_{ij}$ the number in the $(i,j)$th box of $T_\lambda$.
Then for a fixed $i$ each $e_{T_\lambda}f_\ell$ is symmetric in the variables $ x_{k_{i1}}, \ldots, x_{k_{i\lambda_i}} $.
Applying the linearization procedure (see e.g.~\cite[Section~1.3]{ZaiGia}), we obtain that $g$ is a polynomial $H$-identity if and only if 
$\tilde g$ is a polynomial $H$-identity, where $\tilde g$ is obtained
from $g$ by the substitution $x_{k_{ij}} \mapsto x_i$ for all $i$ and $j$.
Denote the number of rows in $T_\lambda$ by $k$. 
By Lemma~\ref{LemmaHStripeTheorem}, we may assume that $k\leqslant \dim A$.
 The $H$-polynomial $\tilde g$ depends on the variables $x_1,\ldots, x_k$
 and Lemma~\ref{LemmaHRelativeK} implies that $\tilde g \in \Id^H(A)$
 if and only if $\tilde g(\xi_1, \ldots, \xi_k) = 0$ in $\tilde A_k$.
 Note that $\tilde g(\xi_1, \ldots, \xi_k) = \alpha_1  u_1 + \dots + \alpha_m u_m$
 where $u_\ell$ is the value of $e_{T_\lambda}f_\ell$ under the substitution
 $x_{k_{ij}} \mapsto \xi_i$ for $1\leqslant i \leqslant k$ and $1\leqslant j \leqslant \lambda_i$. Hence all $u_i \in R_{kn}$ and if $m >  (\dim A) (n+1)^{k\dim A}$,
 then by Lemma~\ref{LemmaHRelativeKdim} for any choice of $f_i$ the elements $u_i$ are linearly
 dependant and $\tilde g(\xi_1, \ldots, \xi_k) = \alpha_1  u_1 + \dots + \alpha_m u_m = 0$
 for some nontrivial $\alpha_i$. In particular, $\alpha_1 e_{T_\lambda}f_1 + \dots \alpha_m e_{T_\lambda}f_m \in \Id^H(A)$ and $m(A,H,\lambda) < m$. Hence for any $\lambda \vdash n$
 we have $m(A,H,\lambda)\leqslant (\dim A) (n+1)^{k\dim A} \leqslant (\dim A) (n+1)^{(\dim A)^2}$.
 Since the number of all partitions $\lambda \vdash n$ of height not greater than
 $\dim A$ does not exceed $n^{\dim A}$,
 we get the desired upper bound for $\ell_n^H(A)$.
\end{proof}

By Lemma~\ref{LemmaCnGrCnGenH} above, if a finite dimensional algebra $A$
  is graded by a set $T$, then the colengths $\ell_n^{T\text{-}\mathrm{gr}}(A)$
  of graded polynomial identities of $A$ are equal to the $F^T$-colengths
  $\ell_n^{F^T}(A)$. Thus we immediately get the following corollary of Theorem~\ref{TheoremUpperBoundColengthHNAssoc}:
\begin{corollary}\label{CorollaryUpperBoundColengthGraded}
 Let $A$ be a finite dimensional algebra over a field of characteristic $0$
  graded by a set $T$. Then 
  $$\ell_n^{T\text{-}\mathrm{gr}}(A) \leqslant (\dim A) (n+1)^{(\dim A)^2+\dim A}$$ for all $n\in\mathbb N$.
\end{corollary}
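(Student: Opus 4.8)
The plan is to deduce the corollary directly from Theorem~\ref{TheoremUpperBoundColengthHNAssoc} by transporting the grading to the associated generalized $F^T$-action. Since $A$ is finite dimensional, its support $\supp\Gamma$ is automatically finite, so Example~\ref{ExampleGr} applies and $A$ becomes an algebra with a generalized $F^T$-action, where $F^T$ is the unital (commutative, associative) algebra of all functions $T\to F$. Note that $F^T$ may well be infinite dimensional if $T$ is infinite, but Theorem~\ref{TheoremUpperBoundColengthHNAssoc} only demands that $H$ be a unital associative algebra, so no finiteness on $H$ is needed and this causes no trouble.

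First I would invoke Lemma~\ref{LemmaCnGrCnGenH}, whose hypotheses ($\supp\Gamma$ finite and, since we are over a field of characteristic $0$, also $\ch F = 0$) are satisfied, to obtain the equality $\ell_n^{T\text{-}\mathrm{gr}}(A) = \ell_n^{F^T}(A)$ for every $n\in\mathbb N$. This reduces the statement about graded colengths to the corresponding statement about $H$-colengths with $H = F^T$.

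Then I would apply Theorem~\ref{TheoremUpperBoundColengthHNAssoc} to the finite dimensional algebra $A$ equipped with its generalized $F^T$-action, which gives $\ell_n^{F^T}(A) \leqslant (\dim A)(n+1)^{(\dim A)^2+\dim A}$ for all $n\in\mathbb N$. Chaining this with the previous equality yields the desired bound. I do not expect any real obstacle: the substantive work has already been done in Lemma~\ref{LemmaCnGrCnGenH} and Theorem~\ref{TheoremUpperBoundColengthHNAssoc}, and the only point to verify is that a finite dimensional $T$-graded algebra genuinely falls within the scope of both results, which is immediate from finiteness of $\dim A$. It is perhaps worth emphasizing in the write-up that the resulting bound depends only on $\dim A$ and is entirely uniform in the grading set $T$.
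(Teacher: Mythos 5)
Your proof is correct and follows exactly the paper's route: finiteness of $\supp\Gamma$ (automatic from $\dim A<\infty$) lets you apply Example~\ref{ExampleGr} and Lemma~\ref{LemmaCnGrCnGenH} to get $\ell_n^{T\text{-}\mathrm{gr}}(A)=\ell_n^{F^T}(A)$, and then Theorem~\ref{TheoremUpperBoundColengthHNAssoc} gives the bound. The paper derives the corollary in precisely this way, so there is nothing to add.
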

  
\section{Existence of the $H$-PI-exponent for $H$-simple algebras}\label{SectionHPIexpExistHSimple}

In Theorem~\ref{TheoremHSimpleHPIexpHNAssoc} below
we prove that for every finite dimensional $H$-simple algebra there exists
an $H$-PI-exponent.
  
Let $\Phi(x_1, \dots, x_s)=\frac{1}{x_1^{x_1} \cdots x_s^{x_s}}$ for $x_1, \dots, x_s > 0$.
Since $\lim_{x\to +0} x^x = 1$, we may assume that $\Phi$ is a continuous function for
$x_1, \dots, x_s \geqslant 0$.
  
   \begin{theorem}\label{TheoremHSimpleHPIexpHNAssoc} Let $A$ be a finite dimensional $H$-simple algebra for some associative algebra $H$ with $1$ over a field $F$ of characteristic $0$, $\dim A = s$. Let $$d(A) := \mathop{\overline\lim}_{n\to\infty}\max\limits_{\substack{\lambda \vdash n, \\ m(A,H,\lambda)\ne 0}}
  \Phi\left(\frac{\lambda_1}{n},\dots, \frac{\lambda_s}{n}\right).$$
  Then there exists $$\PIexp^{H}(A) := \lim_{n\to \infty} \sqrt[n]{c_n^H(A)} = d(A).$$
  \end{theorem}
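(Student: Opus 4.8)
The plan is to establish matching exponential upper and lower bounds for $c_n^H(A)$, both governed by the quantity $d(A)$, which encodes the maximal asymptotic ``area'' of Young diagrams appearing in the $H$-cocharacter. For the upper bound, I would use the standard estimate $c_n^H(A) = \sum_{\lambda\vdash n} m(A,H,\lambda)\dim M(\lambda)$ together with two facts already available: by Lemma~\ref{LemmaHStripeTheorem} only partitions $\lambda$ of height $\le s$ contribute, and by Theorem~\ref{TheoremUpperBoundColengthHNAssoc} the colength $\ell_n^H(A)$ is polynomially bounded. Since for a partition $\lambda\vdash n$ of height at most $s$ one has $\dim M(\lambda)\le n^{O(1)}\,\Phi(\lambda_1/n,\dots,\lambda_s/n)^n$ by the hook-length formula and Stirling's estimate, summing over the polynomially many relevant $\lambda$ gives $c_n^H(A)\le n^{O(1)}\bigl(\max_\lambda \Phi(\lambda_1/n,\dots,\lambda_s/n)\bigr)^n$, whence $\limsup_n\sqrt[n]{c_n^H(A)}\le d(A)$.

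The substantive work is the lower bound $\liminf_n\sqrt[n]{c_n^H(A)}\ge d(A)$, and this is where the $H$-simplicity hypothesis and the Giambruno--Zaicev machinery enter. The idea is: pick a sequence $n_j\to\infty$ and partitions $\mu^{(j)}\vdash n_j$ with $m(A,H,\mu^{(j)})\ne 0$ realizing (in the limit) the value $d(A)$. For each such $\mu=\mu^{(j)}$, the nonvanishing of the multiplicity means there is a multilinear $H$-polynomial $f$ and a Young tableau $T_\mu$ with $e^{*}_{T_\mu} f\notin\Id^H(A)$; equivalently there is a substitution of elements of $A$ on which $e^{*}_{T_\mu}f$ does not vanish. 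Using $H$-simplicity one argues that from such a ``good'' partition one can build good partitions of \emph{all} sufficiently large degrees $n$ lying near the ``ray'' determined by $\mu$: the standard trick is that if $A$ is $H$-simple then $A^n$ spans $A$ for every $n$ (no proper ideals), so one can glue copies of a non-identity of shape $\mu$ together — multiplying several such alternating polynomials and filling in extra variables by an element $w$ with $A w A$-type products covering $A$ — to produce, for each large $n$, a partition $\lambda\vdash n$ with $m(A,H,\lambda)\ne 0$ and $\lambda/n$ close to $\mu/n_j$. This yields $c_n^H(A)\ge \dim M(\lambda)\ge n^{-O(1)}\Phi(\lambda_1/n,\dots,\lambda_s/n)^n$, and taking $n\to\infty$ and then $j\to\infty$, together with the continuity of $\Phi$ on $[0,\infty)^s$ noted before the theorem, gives $\liminf_n\sqrt[n]{c_n^H(A)}\ge d(A)$.

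Combining the two bounds shows that $\lim_n\sqrt[n]{c_n^H(A)}$ exists and equals $d(A)$. It remains to see $d(A)\in\mathbb R_+$, i.e. that it is finite and strictly positive: finiteness is immediate since $\Phi\le s^s$ on the simplex $\{x_i\ge 0,\ \sum x_i\le 1\}$ (or simply from the upper bound, as $c_n^H(A)\le (\dim A)^{2n}$ because $W_n^H$ modulo identities has dimension at most $(\dim A)^{\,2n-1}$ via evaluation), and positivity follows because $A^2\ne 0$ forces $c_n^H(A)\ge 1$ for all $n$, so in fact one shows $d(A)\ge 1$ from the lower-bound construction applied to the trivial partition-ray, or more precisely one notes $d(A)\ge\limsup_n\sqrt[n]{c_n^H(A)}>0$ directly.

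The main obstacle I anticipate is the gluing step in the lower bound: one needs a genuinely uniform way to pass from one non-identity of shape $\mu\vdash n_0$ to non-identities of shape $\approx(n/n_0)\mu$ for all large $n$, while keeping the multiplicities nonzero. In the ordinary (Giambruno--Zaicev) setting this uses the Wedderburn structure of the semisimple part and a careful choice of ``generic'' elements together with the key property $A^{n}=A$ for simple $A$; here the correct replacement is that $H$-simplicity guarantees $A$ has no proper two-sided $H$-invariant ideals, so the $H$-invariant subalgebra generated by any product $a_1\cdots a_k$ (with the $h$-twists allowed by~\eqref{EqGeneralizedHopf}) still exhausts $A$. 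Making this precise — in particular controlling how the bracketings and the $H$-actions from~\eqref{EqGeneralizedHopf} interact when concatenating alternating blocks, and verifying the resulting $H$-polynomial is not an $H$-identity by exhibiting an explicit evaluation — is the technical heart of the argument and the place where one expects to spend most of the effort.
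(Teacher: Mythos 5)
Your overall strategy coincides with the paper's: the upper bound is exactly Theorem~\ref{TheoremUpperBoundCodimPhiHNAssoc} (hook formula plus Stirling, combined with the polynomial colength bound of Theorem~\ref{TheoremUpperBoundColengthHNAssoc} and the height restriction of Lemma~\ref{LemmaHStripeTheorem}), and the lower bound proceeds by gluing non-identities using $H$-simplicity. The step you explicitly leave open~--- the gluing~--- is precisely the content of the paper's Lemma~\ref{LemmaSequenceLambdaLowerHSimpleNAssoc}, so the gap is real but your plan for filling it is the right one. Two details of your sketch, however, would need correction when carried out. First, you claim the glued polynomial yields a partition $\lambda\vdash n$ with $\lambda/n$ \emph{close} to the ray of $\mu$; that is not what the construction gives. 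The $FS_{m+n}$-module generated by a product of two non-identities is a quotient of the induced module $M(\lambda)\hatotimes M(\mu)$, so by the Littlewood--Richardson rule its irreducible constituents correspond to diagrams obtained from $D_{\lambda+\mu}$ by pushing boxes down, and these may be far from $(n/n_j)\mu$. The paper's remedy is not continuity of $\Phi$ but a one-sided monotonicity: pushing boxes down (within height at most $s$, guaranteed by Lemma~\ref{LemmaHStripeTheorem}) can only increase $\Phi$, since $1/\bigl(x^x(\xi-x)^{\xi-x}\bigr)$ is increasing for $x\in(0,\xi/2)$. Without this observation the estimate $\Phi(\lambda/n)\geqslant d(A)-\varepsilon$ does not follow from your argument.

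Second, iterating the gluing produces degrees $n_i$ forming a sequence with gaps bounded by $q+N$ (where $q=|\mu|$ and $N$ bounds the length of the auxiliary nonzero products $(a_1\cdots a_m a^{h_1}\tilde a_1\cdots)(b_1\cdots b^{h_2}\cdots)$ supplied by $H$-simplicity and finite-dimensionality), not a good partition of \emph{every} large $n$ as you assert. The paper bridges the gaps with Lemma~\ref{LemmaCodimNondecrHsimpleHNAssoc}, which shows $c_n^H(A)\leqslant c_{n+1}^H(A)$ for $H$-simple $A$ via $AA=A$; you would need either that lemma or an extra single-variable padding step to pass from the subsequence $(n_i)$ to all $n$. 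With these two adjustments your outline matches the paper's proof essentially verbatim.
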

  
  Theorem~\ref{TheoremHSimpleHPIexpHNAssoc} will be proved below.
 
 Again, combining Theorem~\ref{TheoremHSimpleHPIexpHNAssoc} with Lemma~\ref{LemmaCnGrCnGenH} we get:
  
  \begin{corollary}\label{CorollaryGradedExistsExponent}
  Let $A$ be a finite dimensional algebra over a field of characteristic $0$
  graded by a set $T$ such that $A$ does not have non-trivial graded ideals. Then there exists $\PIexp^{T\text{-}\mathrm{gr}}(A)=\lim_{n\to \infty} \sqrt[n]{c_n^{T\text{-}\mathrm{gr}}(A)}$.
  \end{corollary}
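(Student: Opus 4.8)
The plan is to establish Theorem~\ref{TheoremHSimpleHPIexpHNAssoc} and then deduce Corollary~\ref{CorollaryGradedExistsExponent} by a direct translation argument. The corollary is the easy part given everything that precedes it: by Lemma~\ref{LemmaCnGrCnGenH}, a finite dimensional $T$-graded algebra $A$ with finite support has $c_n^{T\text{-}\mathrm{gr}}(A) = c_n^{F^T}(A)$ for all $n$, so it suffices to check that the hypothesis ``$A$ has no non-trivial graded ideals'' matches the hypothesis ``$A$ is $F^T$-simple'' from Theorem~\ref{TheoremHSimpleHPIexpHNAssoc}, and then to observe that the support of a grading on a finite dimensional algebra is automatically finite. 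For the first point I would note that a subspace $I \subseteq A$ is a graded subspace (i.e. $I = \bigoplus_{t} (I \cap A^{(t)})$) if and only if it is invariant under the $F^T$-action defined in Example~\ref{ExampleGr}: indeed the idempotents $h_t$ act as the projections onto the homogeneous components, so $F^T$-invariance is precisely the statement that $I$ is spanned by its homogeneous parts. Hence graded ideals coincide with $F^T$-invariant two-sided ideals, and ``no non-trivial graded ideals'' together with $A^2 \neq 0$ (which I should address — if $A^2 = 0$ then $c_n^{T\text{-}\mathrm{gr}}(A) = 0$ for $n \geq 2$ and the exponent trivially exists, so we may assume $A^2 \neq 0$) is exactly $F^T$-simplicity.

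The main work, of course, is Theorem~\ref{TheoremHSimpleHPIexpHNAssoc} itself, whose proof the excerpt defers. The strategy I would follow is the Giambruno--Zaicev approach adapted to the generalized $H$-action setting. The upper bound $\overline\lim \sqrt[n]{c_n^H(A)} \leq d(A)$ comes from combining two ingredients already in hand: Theorem~\ref{TheoremUpperBoundColengthHNAssoc}, which gives the polynomial bound $\ell_n^H(A) \leq (\dim A)(n+1)^{(\dim A)^2 + \dim A}$ on the colength, and Lemma~\ref{LemmaHStripeTheorem}, which confines the relevant partitions to height $\leq s = \dim A$. Writing $c_n^H(A) = \sum_{\lambda \vdash n} m(A,H,\lambda) \dim M(\lambda)$ and using the standard estimate $\dim M(\lambda) \leq \left(\Phi(\lambda_1/n, \dots, \lambda_s/n)\right)^n \cdot n^{O(1)}$ (which follows from the hook length formula and Stirling's approximation for partitions of bounded height), one gets $c_n^H(A) \leq (\text{polynomial in } n) \cdot \max_{\lambda}\left(\Phi(\lambda_1/n,\dots)\right)^n$, so taking $n$th roots and upper limits yields $\overline\lim \sqrt[n]{c_n^H(A)} \leq d(A)$.

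The harder direction is the lower bound $\underline\lim \sqrt[n]{c_n^H(A)} \geq d(A)$, and this is where $H$-simplicity is essential. The idea is: pick a sequence $n_k \to \infty$ and partitions $\lambda^{(k)} \vdash n_k$ with $m(A,H,\lambda^{(k)}) \neq 0$ realizing (in the limit) the value $d(A)$; then show that for each such $\lambda^{(k)}$ — or for a slightly perturbed partition $\mu^{(k)}$ obtained by adjusting a bounded number of boxes — one actually has $m(A,H,\mu^{(k)}) \neq 0$, so that $c_{n_k}^H(A) \geq \dim M(\mu^{(k)}) \geq \left(\Phi(\mu^{(k)}_1/n_k, \dots)\right)^{n_k} \cdot n_k^{-O(1)}$, matching $d(A)$ in the limit. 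The engine for ``multiplicity stays nonzero under small perturbations'' is the $H$-simplicity: in an $H$-simple algebra one can construct, for a highest-weight vector / generalized polynomial associated to a tableau $T_\lambda$, a non-identity by exploiting that $A$ has no proper $H$-invariant ideals — concretely, one builds an ``alternating'' $H$-polynomial using the unit element (or a suitable idempotent-like element produced from $A^2 = A$, since $H$-simplicity forces $A^2 = A$) as a ``glue'' to chain together alternating blocks of size $s$ without the product collapsing to zero. This is the technical heart and the main obstacle: carrying out the Giambruno--Zaicev ``multiplication of alternators'' construction when the multiplication is non-associative and the symmetry is only a generalized $H$-action rather than a genuine Hopf action. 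I expect one needs (a) the observation that $H$-simplicity plus $A^2 \neq 0$ gives $HA = A$ and $A \cdot A = A$, which lets one find, for any nonzero $a \in A$, elements realizing $a$ inside products of prescribed length; (b) a subadditivity/regularity argument showing the maximum over $\lambda$ of $\Phi(\lambda/n)$ subject to $m \neq 0$ is ``almost superadditive'' in $n$ (using that one can juxtapose good tableaux for $n_1$ and $n_2$ to get a good tableau for $n_1 + n_2$, again via $H$-simplicity to ensure the concatenated polynomial is still not an identity), so that $\lim$, not merely $\underline\lim$, exists by a Fekete-type lemma; and (c) the continuity of $\Phi$ on the closed simplex to pass between the discrete partition data and the limiting value $d(A)$. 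Assembling (a)--(c) gives $\lim \sqrt[n]{c_n^H(A)} = d(A)$, completing the proof.
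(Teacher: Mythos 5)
Your proof of the corollary is correct and follows the paper's own route exactly: identify graded codimensions with $F^T$-codimensions via Lemma~\ref{LemmaCnGrCnGenH} (the support of a grading on a finite dimensional algebra being automatically finite) and apply Theorem~\ref{TheoremHSimpleHPIexpHNAssoc}, the only content being the observation that invariance under the idempotents $h_t$, which act as the projections onto the homogeneous components, is the same as being a graded subspace; your handling of the degenerate case $A^2=0$ is a detail the paper leaves implicit. Your accompanying sketch of Theorem~\ref{TheoremHSimpleHPIexpHNAssoc} also matches the paper's strategy (the colength bound of Theorem~\ref{TheoremUpperBoundColengthHNAssoc} plus hook-formula and Stirling estimates for the upper bound; gluing non-identities via $H$-simplicity, the Littlewood--Richardson rule, and the monotonicity of $c_n^H(A)$ for the lower bound), though that theorem is proved in the paper and need not be re-established for the corollary.
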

   
  First we prove that the $H$-codimension sequence
  is non-decreasing for any $H$-simple algebra.
  
   \begin{lemma}\label{LemmaCodimNondecrHsimpleHNAssoc} Let $A$ be an $H$-simple algebra for some associative algebra $H$ with $1$ over any field $F$.
  Then $c_n^H(A) \leqslant c_{n+1}^H(A)$ for all $n\in\mathbb N$.
  \end{lemma}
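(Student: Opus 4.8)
The plan is to find, for each $n$, an injective linear map from $\frac{W_n^H}{W_n^H\cap\Id^H(A)}$ into $\frac{W_{n+1}^H}{W_{n+1}^H\cap\Id^H(A)}$ so that $c_n^H(A)\le c_{n+1}^H(A)$. The natural candidate is multiplication by one extra variable: send the class of a multilinear $H$-polynomial $f(x_1,\dots,x_n)$ to the class of $x_{n+1}^{\,h}f(x_1,\dots,x_n)$ (or $f\cdot x_{n+1}^{\,h}$) for a suitable choice of $h\in H$ depending on $f$. The point of $H$-simplicity is precisely to guarantee that such an $h$ can be chosen so that the product is \emph{not} an $H$-identity whenever $f$ is not; without simplicity this fails (e.g.\ if $A$ has a trivial summand that kills products on one side).

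Concretely, first I would observe that since $A$ is $H$-simple we have $A^2\ne 0$, and moreover $A\cdot A = A$: the span $A^2$ is an $H$-invariant ideal by~(\ref{EqGeneralizedHopf}) (applying $h$ to a product lands again in $A^2$), and it is nonzero, so $A^2=A$; similarly $HA=A$ since $HA$ is a nonzero $H$-invariant ideal. Next, given $f\in W_n^H$ with $f\notin\Id^H(A)$, pick $a_1,\dots,a_n\in A$ with $b:=f(a_1,\dots,a_n)\ne 0$. Since $A$ is $H$-simple, the $H$-invariant ideal generated by $b$ is all of $A$; in particular $A$ is not annihilated by left multiplications composed with the $H$-action by $b$, so there exist $h\in H$ and $c\in A$ with $(h c)\,b\ne 0$ or $b\,(hc)\ne 0$. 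I would then set $g(x_1,\dots,x_{n+1}) := x_{n+1}^{\,h}\bigl(f(x_1,\dots,x_n)\bigr)$ (respectively with the factor on the right), which is a multilinear element of $W_{n+1}^H$ and satisfies $g(a_1,\dots,a_n,c)=(hc)\,b\ne 0$, hence $g\notin\Id^H(A)$.

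To turn this into the codimension inequality, I would argue as follows. Let $f_1,\dots,f_m\in W_n^H$ be such that $\bar f_1,\dots,\bar f_m$ are linearly independent in $\frac{W_n^H}{W_n^H\cap\Id^H(A)}$, so $m=c_n^H(A)$. It suffices to show that a single $h\in H$ works for all nonzero linear combinations simultaneously, i.e.\ that the map $\bar f\mapsto \overline{x_{n+1}^{\,h}f}$ is injective on the whole quotient for an appropriate $h$. For this note that $f\mapsto x_{n+1}^{\,h}f$ is a linear map $W_n^H\to W_{n+1}^H$, and its kernel modulo identities is controlled by the bilinear pairing coming from evaluating $(hc)\cdot f(a_1,\dots,a_n)$; running over all $c\in A$ and all $h$ in a basis of (the finite-dimensional image of) $H$ in $\End(A)$, one gets that the intersection over all such $h$ of the kernels of $\bar f\mapsto\overline{x_{n+1}^{\,h}f}$ consists of those $\bar f$ with $(HA)\cdot f(A,\dots,A)=0$, i.e.\ $A\cdot f(A,\dots,A)=0$ using $HA=A$. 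Combined with the symmetric statement for right multiplication, and the fact that the set of $b\in A$ with $Ab=bA=0$ is a two-sided $H$-invariant ideal (again by~(\ref{EqGeneralizedHopf})), which is therefore $0$ by $H$-simplicity, one concludes $f\in\Id^H(A)$. Since the base field is infinite (in fact we only need finitely many $h$ and $c$), a Vandermonde/generic-choice argument produces a single $h$ (or a single product variable of the form $x_{n+1}^{\,h}$ plus possibly $f\cdot x_{n+1}^{\,h'}$) making the composite map injective, giving $c_n^H(A)\le c_{n+1}^H(A)$.

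The main obstacle is the last step: passing from "for each $\bar f\ne 0$ there is some $h$ that works" to "there is one $h$ (or one fixed new-variable placement) that works for all $\bar f$ at once." Over an infinite field this is a standard genericity argument — the bad set is a finite union of proper subspaces, or one writes the obstruction as the non-vanishing of a polynomial in the parameters — but one must be careful because $H$ may be infinite-dimensional; the fix is to replace $H$ by its (finite-dimensional) image in $\End(A)$, which is all that matters for evaluations in $A$, and then only finitely many coordinates are in play. A secondary subtlety is whether one needs both left- and right-multiplication variables: for a possibly non-associative, non-commutative $A$ it is cleanest to allow the injection to use either $x_{n+1}^{\,h}f$ or $f\,x_{n+1}^{\,h'}$, and $H$-simplicity (via the annihilator ideal argument above) guarantees at least one of the two never kills a given nonzero $\bar f$.
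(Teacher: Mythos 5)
Your strategy of adjoining the new variable as an external left/right multiplier, $\bar f\mapsto\overline{x_{n+1}^{\,h}f}$ or $\overline{f\,x_{n+1}^{\,h}}$, does not work, and the gap is exactly at the step you flag as an "annihilator ideal argument." The set $I=\lbrace b\in A \mid Ab=bA=0\rbrace$ is indeed an ideal, but it is \emph{not} $H$-invariant "by~(\ref{EqGeneralizedHopf})": identity~(\ref{EqGeneralizedHopf}) only expresses $h(ab)$ through products of $H$-images, and gives no control over $a(hb)$ or $(hb)a$, so $Hb\subseteq I$ does not follow from $b\in I$. In fact $I$ can be nonzero for an $H$-simple algebra. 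Take $A$ with basis $e_1,e_2,z$ and products $e_1e_1=e_1$, $e_2e_2=e_2$, $e_1e_2=z$, all other products of basis elements zero, and let $H=\End_F(A)$ act tautologically. Since the multiplication map $m\colon A\otimes A\to A$ is surjective ($A^2=A$), every map $h\circ m$ factors as $m\circ\Psi$ with $\Psi\in\End_F(A)\otimes\End_F(A)$, so~(\ref{EqGeneralizedHopf}) holds; $A$ has no proper nonzero $H$-submodules at all, hence is $H$-simple; yet $I=Fz\neq 0$. Worse for your argument: with $p_i$ the projection onto $Fe_i$, the $H$-polynomial $f=x_1^{p_1}x_2^{p_2}$ is not an $H$-identity ($f(e_1,e_2)=z$) but all its values lie in $Fz=I$, so $x_3^{\,h}f\equiv 0$ and $f\,x_3^{\,h}\equiv 0$ for \emph{every} $h\in H$. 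Thus even the per-class existence of a good $h$ fails (note also that "the $H$-invariant ideal generated by $b$ is all of $A$" does not imply $Ab\neq0$ or $bA\neq0$, as $b=z$ shows), and no genericity/Vandermonde argument can rescue the construction. Your preliminary observations ($A^2$ is an $H$-invariant ideal by~(\ref{EqGeneralizedHopf}), hence $A^2=A$) are correct and are all that is actually needed.

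The paper's proof uses a different embedding that sidesteps the annihilator entirely: instead of multiplying $f$ by a fresh variable from the outside, it substitutes the product $x_nx_{n+1}$ \emph{into an existing slot} of $f$, sending a basis $\bar f_1,\dots,\bar f_{c_n^H(A)}$ of $W_n^H/(W_n^H\cap\Id^H(A))$ to the classes of $f_i(x_1,\dots,x_{n-1},x_nx_{n+1})$ (rewritten inside $W_{n+1}^H$ via~(\ref{EqGeneralizedHopf})). If a linear combination of these is an $H$-identity, then $\sum_i\alpha_i f_i(a_1,\dots,a_{n-1},b)=0$ for all $b\in A^2=A$ by linearity in the last argument, forcing $\sum_i\alpha_i f_i\in\Id^H(A)$ and hence $\alpha_i=0$. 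This uses only $A^2=A$ and works uniformly, with no case split between left and right multiplication and no choice of $h$. If you want to repair your write-up, replace the external factor $x_{n+1}^{\,h}f$ by this internal substitution.
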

  \begin{proof} Fix some $n\in\mathbb N$.
  Let $f_1(x_1, \dots, x_n)$, \dots, $f_{c_n^H(A)}(x_1, \dots, x_n)$ be such $H$-polynomials that their images form a basis
  in $\frac{W_n^H}{W_n^H \cap \Id^H(A)}$.
  Suppose the $H$-polynomials $f_1(x_1, \dots, x_n x_{n+1}), \dots, f_{c_n^H(A)}(x_1, \dots, x_n x_{n+1})$ are linearly dependent modulo $\Id^H(A)$.
  Then there exist $\alpha_1, \dots, \alpha_{c_n^H(A)} \in F$
  such that $$\alpha_1 f_1(a_1, \dots, a_n a_{n+1})+ \dots + \alpha_{c_n^H(A)} f_{c_n^H(A)}(a_1, \dots, a_n a_{n+1}) = 0$$
  for all $a_i \in A$. Since $A$ is $H$-simple, $AA=A$, and 
  $$\alpha_1 f_1(a_1, \dots, a_n)+ \dots + \alpha_{c_n^H(A)} f_{c_n^H(A)}(a_1, \dots, a_n) = 0$$
  for all $a_i \in A$. However, $f_1(x_1, \dots, x_n), \dots, f_{c_n^H(A)}(x_1, \dots, x_n)$
  are linearly independent modulo $\Id^H(A)$. Hence $\alpha_1= \dots= \alpha_{c_n^H(A)}=0$,
  $f_1(x_1, \dots, x_n x_{n+1}), \dots, f_{c_n^H(A)}(x_1, \dots, x_n x_{n+1})$ are linearly independent modulo $\Id^H(A)$, and $c_n^H(A) \leqslant c_{n+1}^H(A)$.
  \end{proof} 
  
  Next we prove the upper bound for $c_n^H(A)$.
  
  \begin{theorem}\label{TheoremUpperBoundCodimPhiHNAssoc} Let $A$ be a finite dimensional algebra with a generalized $H$-action for some associative algebra $H$ with $1$ over a field $F$ of characteristic $0$, $\dim A = s$.
  Then there exist $C > 0$ and $r\in \mathbb R$ such that
  $$c_n^H(A) \leqslant Cn^r \left(\max\limits_{\substack{\lambda \vdash n, \\ m(A,H,\lambda)\ne 0}}
  \Phi\left(\frac{\lambda_1}{n},\dots, \frac{\lambda_s}{n}\right)\right)^n \text{ for all } n\in\mathbb N.$$
  \end{theorem}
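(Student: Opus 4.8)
The plan is to bound $c_n^H(A)$ by summing, over all partitions $\lambda\vdash n$ with $m(A,H,\lambda)\neq 0$, the product $m(A,H,\lambda)\cdot\deg\chi(\lambda)$, and then to estimate each factor. First I would write $c_n^H(A)=\sum_{\lambda\vdash n}m(A,H,\lambda)\,\deg\chi(\lambda)$, which is just the fact that $c_n^H(A)=\dim\frac{W_n^H}{W_n^H\cap\Id^H(A)}$ together with the decomposition of the cocharacter into irreducibles. By Lemma~\ref{LemmaHStripeTheorem} only partitions of height at most $s=\dim A$ contribute, so the sum has at most $n^s$ terms. By Theorem~\ref{TheoremUpperBoundColengthHNAssoc} each multiplicity $m(A,H,\lambda)$ is bounded by the polynomial $(\dim A)(n+1)^{(\dim A)^2+\dim A}$. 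Hence $c_n^H(A)\leqslant (\dim A)(n+1)^{(\dim A)^2+\dim A}\cdot n^s\cdot\max_{\lambda}\deg\chi(\lambda)$, where the max is over $\lambda\vdash n$ of height $\leqslant s$ with $m(A,H,\lambda)\neq 0$. The polynomial prefactor is already of the form $Cn^r$, so everything reduces to estimating $\deg\chi(\lambda)$ for such $\lambda$.

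The key step is therefore the hook-length / Stirling estimate: for a partition $\lambda=(\lambda_1,\dots,\lambda_s)\vdash n$ (allowing zero parts), one has
$$\deg\chi(\lambda)\leqslant C'n^{r'}\,\Phi\!\left(\frac{\lambda_1}{n},\dots,\frac{\lambda_s}{n}\right)^{n}$$
for constants $C',r'$ depending only on $s$. I would obtain this from the standard bound $\deg\chi(\lambda)\leqslant\binom{n}{\lambda_1,\dots,\lambda_s}$ (each standard Young tableau of shape $\lambda$ is determined by the sets of entries in its rows, with repetition only decreasing the count), and then apply Stirling's formula: writing $\lambda_i=x_i n$ with $\sum x_i=1$, one gets $\binom{n}{\lambda_1,\dots,\lambda_s}\leqslant C'' n^{(s-1)/2}\prod_i x_i^{-x_i n}= C'' n^{(s-1)/2}\,\Phi(x_1,\dots,x_s)^n$, using the convention $x_i^{x_i}\to 1$ as $x_i\to 0$ that makes $\Phi$ continuous on the closed simplex. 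This is essentially the argument in Giambruno–Zaicev~\cite{ZaiGia} for the ordinary case, and it transfers verbatim since it concerns only the symmetric group side.

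Combining the two estimates: for every $\lambda\vdash n$ with $m(A,H,\lambda)\neq 0$ we have height $\lambda\leqslant s$, so
$$c_n^H(A)\leqslant \underbrace{(\dim A)(n+1)^{(\dim A)^2+\dim A}}_{\text{colength bound}}\cdot\underbrace{n^{s}}_{\#\text{ partitions}}\cdot\underbrace{C''n^{(s-1)/2}}_{\text{Stirling}}\cdot\left(\max\limits_{\substack{\lambda\vdash n,\\ m(A,H,\lambda)\neq 0}}\Phi\!\left(\tfrac{\lambda_1}{n},\dots,\tfrac{\lambda_s}{n}\right)\right)^{n},$$
and collecting the polynomial factors into a single $Cn^r$ gives the claim. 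The main obstacle is purely technical: one must be careful that $\Phi$ is evaluated at an $s$-tuple (padding $\lambda$ with zero parts so it always has exactly $s$ coordinates), that the continuity convention at $0$ is used consistently, and that the Stirling estimate is applied uniformly over the simplex rather than pointwise; none of this is deep, but it is where an off-by-a-polynomial-factor slip could hide. No genuinely new idea beyond the two cited inputs (Theorem~\ref{TheoremUpperBoundColengthHNAssoc} and the classical degree estimate) is required.
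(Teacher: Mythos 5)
Your proposal is correct and follows essentially the same route as the paper: the paper likewise combines the colength bound of Theorem~\ref{TheoremUpperBoundColengthHNAssoc} with the estimate $\dim M(\lambda)\leqslant n!/(\lambda_1!\cdots\lambda_s!)$ (obtained there via the hook length formula rather than your direct multinomial count of tableaux) and Stirling's formula to get $\dim M(\lambda)\leqslant C_1 n^{r_1}\Phi(\lambda_1/n,\dots,\lambda_s/n)^n$. Your extra factor of $n^s$ for the number of partitions is harmless and the bookkeeping is sound, so this is a faithful reconstruction of the paper's argument.
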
 
\begin{proof}
Let $\lambda \vdash n$ such that $m(A,H,\lambda)\ne 0$.
By the hook formula, $\dim M(\lambda)=\frac{n!}{\prod_{i,j} h_{ij}}$
where $h_{ij}$ is the length of the hook with the edge in $(i,j)$
in the Young diagram $D_\lambda$. Hence
$\dim M(\lambda) \leqslant \frac{n!}{\lambda_1! \cdots \lambda_s!}$.
By the Stirling formula, for all sufficiently large $n$ we have \begin{equation}\begin{split}\label{EqMlambdaUpperFd}\dim M(\lambda) \leqslant \frac{C_1 n^{r_1}
\left(\frac{n}{e}\right)^n}{\left(\frac{\lambda_1}{e}\right)^{\lambda_1}\cdots
\left(\frac{\lambda_s}{e}\right)^{\lambda_s}}=C_1 n^{r_1}\left(\frac{1}
{\left(\frac{\lambda_1}{n}\right)^{\frac{\lambda_1}{n}}\cdots
\left(\frac{\lambda_s}{n}\right)^{\frac{\lambda_s}{n}}}\right)^n
\\ \leqslant C_1 n^{r_1} \left(\Phi\left(\frac{\lambda_1}{n}, \dots, \frac{\lambda_s}{n}\right)\right)^n \end{split}\end{equation}
for some $C_1 > 0$ and $r_1 \in\mathbb R$ that do not depend on $\lambda_i$.
Together with Theorem~\ref{TheoremUpperBoundColengthHNAssoc} this yields the theorem.
\end{proof}

Throughout the rest of the section we work under the assumptions of Theorem~\ref{TheoremHSimpleHPIexpHNAssoc}.

Let $\lambda \vdash n$, $\mu \vdash m$, 
 $FS_n \bar f_1 \cong M(\lambda)$ and $FS_m \bar f_2 \cong M(\mu)$
for some $m,n \in\mathbb N$, $f_1 \in W_n^H$ and $f_2 \in W_m^H$.
Then the image of the polynomial $f_1(x_1, \dots, x_n)f_2(x_{n+1}, \dots, x_{m+n})$
generates an $FS_{m+n}$-submodule of $\frac{W^H_{m+n}}{W^H_{m+n} \cap\, \Id^H(A)}$ which
is a homomorphic image of $$M(\lambda) \hatotimes M(\mu) := (M(\lambda) \otimes M(\mu))
\uparrow S_{m+n} := FS_{m+n} \mathbin{\otimes_{F(S_n \times S_m)}} (M(\lambda) \otimes M(\mu)).$$
By the Littlewood~--- Richardson rule, all irreducible components in the decomposition
of $M(\lambda) \hatotimes M(\mu)$ correspond to Young diagrams $D_\nu$ that are obtained from
$D_{\lambda+\mu}$ by pushing some boxes down. By our assumptions,
the height of $D_{\nu}$ cannot be greater than $s=\dim A$. Another remark is that, in the process of
pushing boxes down, the value of $\Phi$ is non-decreasing since
the function $\frac{1}{x^x(\xi-x)^{\xi-x}}$ is increasing as $x \in \left(0; \frac{\xi}2 \right)$
for fixed $0 < \xi \leqslant 1$.

\begin{lemma}\label{LemmaSequenceLambdaLowerHSimpleNAssoc}
There exists a constant $N \in\mathbb N$ such that
for every $\varepsilon > 0$ there exist a number $q \in \mathbb N$,
natural numbers $n_1 < n_2 < n_3 < \dots$ such that $n_{i+1} - n_i \leqslant N+q$,
and partitions $\lambda^{(i)} \vdash n_i$, $m\left(A, H, \lambda^{(i)} \right)\ne 0$
such that $\Phi\left(\frac{\lambda^{(i)}_1}{n_i}, \dots, \frac{\lambda^{(i)}_s}{n_i}\right) \geqslant
d(A)-\varepsilon$ for all $i\in \mathbb N$.
\end{lemma}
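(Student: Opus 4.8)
The plan is to produce $(n_i)$ by first extracting a ``seed'' near-optimal partition from the fact that $d(A)$ is an upper limit, and then repeatedly multiplying a growing non-identity $H$-polynomial by a suitably padded copy of a fixed polynomial realizing that seed, keeping control at each step of both the degree increment and the value of $\Phi$. Fix $\varepsilon>0$; we may assume $d(A)-\varepsilon>1$, since otherwise $\Phi\geq 1$ makes the statement vacuous. By the definition of $d(A)$ as an upper limit, choose $\mu\vdash\tilde n$ with $m(A,H,\mu)\ne 0$ and $\Phi(\mu_1/\tilde n,\dots,\mu_s/\tilde n)>d(A)-\varepsilon/2$, and fix $f_0\in W^H_{\tilde n}$ with $FS_{\tilde n}\bar f_0\cong M(\mu)$, so $f_0\notin\Id^H(A)$. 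Let $N$ be the constant produced in the last paragraph (it depends only on $A$). Since $\Phi$ is uniformly continuous on the simplex and adjoining at most $N$ boxes to a partition of a number $m$ perturbs its normalized shape by $O((N\log m)/m)$, we may further take $\tilde n$ so large that every partition obtained from $\mu$ by adding at most $N$ boxes still has $\Phi$-value $>d(A)-\varepsilon/2$. Put $n_1:=\tilde n$, $\lambda^{(1)}:=\mu$.

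The inductive step: given a non-identity $f\in W^H_{n_i}$ with $FS_{n_i}\bar f\cong M(\lambda^{(i)})$ and $\Phi(\lambda^{(i)}_1/n_i,\dots,\lambda^{(i)}_s/n_i)\geq d(A)-\varepsilon$, I construct $n_{i+1}:=n_i+\tilde n+k$ with $0\leq k\leq N$, a partition $\lambda^{(i+1)}\vdash n_{i+1}$ with $m(A,H,\lambda^{(i+1)})\ne 0$ and $\Phi(\lambda^{(i+1)}_1/n_{i+1},\dots)\geq d(A)-\varepsilon$, and a non-identity $g\in W^H_{n_{i+1}}$ with $FS_{n_{i+1}}\bar g\cong M(\lambda^{(i+1)})$; then $n_{i+1}-n_i=\tilde n+k\leq\tilde n+N$, and iterating from $f_0$ yields the lemma (the constant $N$ of the statement being this one). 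The construction: let $\widehat{f_0}$ be obtained from $f_0$ by at most $N$ left or right multiplications by new variables and applications of elements of $H$ (the latter pushed into the $H$-polynomial via~(\ref{EqGeneralizedHopf})), chosen as in the last paragraph so that $f\cdot\widehat{f_0}\notin\Id^H(A)$. Applying an element of $H$ to a block is, modulo $\Id^H(A)$, an $S$-equivariant operation, hence preserves its isotype; each multiplication adjoins one box; so $\deg\widehat{f_0}=\tilde n+k$ with $k\leq N$, and — after pushing boxes down — every irreducible constituent of $FS_{\tilde n+k}\overline{\widehat{f_0}}$ is obtained from $\mu$ by adjoining $k\leq N$ boxes, and so has $\Phi$-value $>d(A)-\varepsilon/2$ and height $\leq s=\dim A$ by Lemma~\ref{LemmaHStripeTheorem}. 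Now $\overline{f\cdot\widehat{f_0}}$ generates a (nonzero) homomorphic image of a direct sum of copies of the modules $M(\lambda^{(i)})\hatotimes M(\mu')$ over the constituents $M(\mu')$ of $FS_{\tilde n+k}\overline{\widehat{f_0}}$; pick any irreducible constituent $M(\lambda^{(i+1)})$ of it and a generator $g$ of a submodule isomorphic to it. By the Littlewood--Richardson rule and the monotonicity of $\Phi$ under pushing boxes down (recalled before the statement), $\Phi(\lambda^{(i+1)}/n_{i+1})\geq\Phi((\lambda^{(i)}+\mu')/n_{i+1})$ for the relevant $\mu'$, and since $\log\Phi$ on the simplex is the concave Shannon entropy, this is at least the weighted geometric mean of $\Phi(\lambda^{(i)}/n_i)\geq d(A)-\varepsilon$ and $\Phi(\mu'/(\tilde n+k))>d(A)-\varepsilon/2$, hence $\geq d(A)-\varepsilon$; finally $m(A,H,\lambda^{(i+1)})\ne 0$ since $\lambda^{(i+1)}$ occurs in the cocharacter, and its height is $\leq s$ by Lemma~\ref{LemmaHStripeTheorem}. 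This closes the induction.

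What makes the padding possible is the multiplication algebra $M_H(A)\subseteq\End_F(A)$ — the unital associative subalgebra generated by $\{h|_A:h\in H\}$ and all $L_a,R_a$ ($a\in A$). Since $A$ is $H$-simple, $M_H(A)a$ is, for each $0\ne a\in A$, a nonzero two-sided $H$-invariant ideal, hence equals $A$; the module being unital, $1\in M_H(A)$, so $M_H(A)$ is spanned by products of at most $\dim_F M_H(A)\leq(\dim A)^2$ of its generators, of which — the $h|_A$ spanning the finite-dimensional image of $H$ in $\End_F(A)$ — there are, up to the choice of the $L/R$-operands, finitely many; hence there is a constant $N$ depending only on $A$ and a finite family of $H$-monomials of degree $\leq N$ whose values at any $0\ne a\in A$ span $A$. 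To use this, I first arrange that some value $a=f(\vec a)$ of $f$ lies outside $\Ann(A)=\{x\in A:xA=Ax=0\}$: the $H$-submodule $Ha$ of $A$ is not contained in $\Ann(A)$ (the largest $H$-submodule of $A$ inside $\Ann(A)$ is a two-sided $H$-invariant ideal, hence $0$, while $A^2=A$), so replacing $f$ by the isotype-, degree- and $\Phi$-preserving polynomial obtained by applying a suitable $h'\in H$ to it, we may assume $aA\ne 0$ or $Aa\ne 0$, say $aA\ne 0$. Then the values at a fixed nonzero value of $f_0$ of the monomials in the family span $A$, so one of these monomials, applied to $f_0$ with its $L/R$-operands taken to be fresh variables, gives a $\widehat{f_0}$ of degree $\leq\tilde n+N$ with some value $c'$ satisfying $a\cdot c'\ne 0$, whence $f\cdot\widehat{f_0}\notin\Id^H(A)$; if $Aa\ne 0$ instead, use $\widehat{f_0}\cdot f$. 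The part I expect to need the most care is precisely this paragraph together with the verification that $D_{h'}$ and the in-block $H$-operations are $S$-equivariant modulo $\Id^H(A)$, so that isotypes are genuinely preserved under them.
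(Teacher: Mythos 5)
Your proof is correct, and its overall skeleton coincides with the paper's: extract a near-optimal seed partition $\mu$ and a polynomial realizing $M(\mu)$, use $H$-simplicity to glue a padded copy of it onto the current non-identity with only a bounded number $N$ of extra variables, and control $\Phi$ via the Littlewood--Richardson rule and the monotonicity of $\Phi$ under pushing boxes down. Two of your sub-arguments, however, genuinely differ from the paper's, and both are worth noting. First, the paper obtains $N$ by asserting directly from $H$-simplicity that for each pair of nonzero basis elements $a,b$ some product $(a_1\cdots a_m a^{h_1}\tilde a_1\cdots)(b_1\cdots b_k b^{h_2}\tilde b_1\cdots)$ is nonzero, and takes the maximum of $k+\tilde k+m+\tilde m$ over pairs; you instead bound $N$ by $\dim M_H(A)\leqslant(\dim A)^2$ via the multiplication algebra, and split the nondegeneracy into two cleaner steps (moving a value of $f$ out of $\Ann(A)$ by an $S_n$-equivariant application of some $h'$, then steering a value of the padded $f_0$ so the product is nonzero). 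This is more explicit than the paper and pads only one factor. Second, and more substantially, the paper controls $\Phi$ globally: it keeps the full product $f_i$ at each stage, so that $\lambda^{(i)}$ is dominated by $\bigl((i+1)\mu_1+\sum_j d_j,(i+1)\mu_2,\dots\bigr)$, and a single uniform-continuity estimate~(\ref{EquationPhiLambda}) covers all $i$ at once. You instead extract an irreducible generator at every step and propagate the bound $\Phi\geqslant d(A)-\varepsilon$ by the concavity of $\log\Phi$ (Shannon entropy), writing $\frac{\lambda^{(i)}+\mu'}{n_{i+1}}$ as a convex combination of $\frac{\lambda^{(i)}}{n_i}$ and $\frac{\mu'}{\tilde n+k}$ and taking the weighted geometric mean. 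This is a local, self-sustaining induction that avoids tracking the cumulative shape; the price is that you must verify at each step that the in-block $H$-operations are $S_n$-equivariant modulo $\Id^H(A)$ so that isotypes (and hence heights, via Lemma~\ref{LemmaHStripeTheorem}) are preserved --- which you correctly identify as the delicate point and which does hold, since pushing $h$ inside a monomial via~(\ref{EqGeneralizedHopf}) commutes with relabelling variables and preserves $\Id^H(A)$.
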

\begin{proof} Denote by $B$ the subalgebra of the associative algebra $\End_F(A)$
generated by all operators of left and right multiplication by elements of $A$ as well as
the images of all elements of $H$ considered as operators on $A$. Since $A$ is an $H$-simple algebra,
 $A$ is an
irreducible left $B$-module.
Denote by $\mathcal B$ the set of all operators $u\in B$
of the following form: there exist $a_1, \dots, a_m, \tilde a_1, \dots, \tilde a_{\tilde m}
 \in A$, $m, \tilde m\in \mathbb Z_+$, $h \in H$
 and some arrangement of brackets such that
$ua = a_1 \cdots a_m (h a) \tilde a_1 \cdots \tilde a_{\tilde m}$ for all $a\in A$.
 Using~\eqref{EqGeneralizedHopf},
we can incorporate operators of the $H$-action in the operators of left and right multiplication by elements of $A$ and present each operator from $B$ as
a linear combination of operators from $\mathcal B$. 
Since $\End_F(A)$ is finite dimensional,
we can choose a finite basis $u_1, \ldots, u_{\dim B} \in \mathcal B$ of the vector space $B$.
 Denote by $N$ the maximal number
$2(m + \tilde m)$ among all $u_i$.

Since $A$ is $H$-simple, $A^2\ne 0$ and for every $a,b\in A$, $a\ne 0$, $b\ne 0$, 
we have $A=Ba=Bb$ and $(Ba)(Bb)\ne 0$. The choice of $N$ implies that
there exist some $a_1, \dots, a_m, \tilde a_1, \dots, \tilde a_{\tilde m}, b_1,\dots, b_k,
\tilde b_1, \dots, \tilde b_{\tilde k}
 \in A$, $k,\tilde k, m, \tilde m\in \mathbb Z_+$, $h_1, h_2 \in H$, such that
$$(a_1 \cdots a_m (h_1 a) \tilde a_1 \cdots \tilde a_{\tilde m})(b_1 \cdots b_k (h_2 b) 
\tilde b_1 \cdots \tilde b_{\tilde k})\ne 0$$ (for some arrangements of brackets on the multipliers)
and $k+\tilde k + m +\tilde m \leqslant N$.

Now we choose $q \in\mathbb N$ such that $\Phi\left(\frac{\mu_1}{q}, \dots, \frac{\mu_s}{q}\right) \geqslant d(A)-\varepsilon/2$ and $m(A,H,\mu)\ne 0$ for some $\mu \vdash q$. 
Recall that $\Phi$ is continuous on $[0;1]^s$
and therefore uniformly continuous on $[0;1]^s$
since $[0;1]^s$ is compact.
Since we can take $q$ arbitrarily large,
we may assume also that \begin{equation}\begin{split}\label{EquationPhiLambda}\Phi\left(\frac{i\mu_1+\sum_{j=1}^i d_j}{
 iq+\sum_{j=1}^i d_j}, \frac{i\mu_2}{iq+\sum_{j=1}^i d_j}, \dots, \frac{i\mu_s}{iq+\sum_{j=1}^i d_j}\right) \\ =
\Phi\left(\frac{\frac{\mu_1}{q}+\frac{\sum_{j=1}^i d_j}{iq}}{
1+\frac{\sum_{j=1}^i d_j}{iq}}, \frac{\left(\frac{\mu_2}{q}\right)}{
1+\frac{\sum_{j=1}^i d_j}{iq}}, \dots, \frac{\left(\frac{\mu_s}{q}\right)}{1+\frac{\sum_{j=1}^i d_j}{iq}}\right)
\geqslant d(A)-\varepsilon\end{split}\end{equation} for all $i\in\mathbb N$ and all $0 \leqslant d_i \leqslant N$.

Choose $\tilde f \in W^H_{q} \backslash \Id^H(A)$ such that $FS_q \tilde f \cong M(\mu)$.
Remarks made in the beginning of the proof imply that for some
arrangements of brackets, some $h_1, h_2 \in H$, and some $k, \tilde k, m, \tilde m \geqslant 0$ such that $d_1:=k+ \tilde k + m+ \tilde m
\leqslant N$, we have $$f_1:=\left(y_1 \cdots y_k \tilde f^{h_1}(x_1, \dots, x_q)
\tilde y_1 \cdots \tilde y_{\tilde k} \right)\left( z_1 \cdots z_m \tilde f^{h_2}(\tilde x_1, \dots, \tilde x_q)
\tilde z_1 \cdots \tilde z_{\tilde m}\right) \notin \Id^H(A).$$
(The notation $f^h$ was introduced in Remark~\ref{RemarkHActionOnWHn} above.)

Consider the $FS_{q+k+\tilde k}$-submodule $M$ of $\frac{W^H_{q+k+\tilde k}}{
W^H_{q+k+\tilde k} \cap\,\Id^H(A)}$
generated by the image of $y_1 \cdots y_k \tilde f^{h_1}(x_1, \dots, x_q)
\tilde y_1 \cdots \tilde y_{\tilde k}$.
By Remark~\ref{RemarkHActionOnWHn},
the image of $\tilde f^{h_1}(x_1, \dots, x_q)$ generates an $FS_q$-submodule isomorphic to $M(\mu)$
and therefore 
 $M$ is a homomorphic image of $$M(\mu)\mathrel{\widehat\otimes}
FS_{k+\tilde k} := (M(\mu)\otimes
FS_{k+\tilde k}) \uparrow FS_{q+k+\tilde k}.$$
Since all partitions of $k+\tilde k$ are obtained
from the row of length $k+\tilde k$ by pushing some boxes down,
by the Littlewood~--- Richardson rule,
all the partitions in the decomposition of $M$ are obtained
from $(\mu_1+ k+\tilde k, \mu_2, \dots, \mu_s)$
by pushing some boxes down.
The same arguments can be applied to 
$z_1 \cdots z_m \tilde f^{h_2}(\tilde x_1, \dots, \tilde x_q)
\tilde z_1 \cdots \tilde z_{\tilde m}$. 

Let $n_1 := 2q+d_1$ and
let $\lambda^{(1)}$ be one of the partitions corresponding to
the irreducible components in the decomposition of $FS_{n_1}\bar f_1$.
Then by~(\ref{EquationPhiLambda}), the remarks above and the remark before the lemma, we have $\Phi\left(\frac{\lambda^{(1)}_1}{n_1}, \dots, \frac{\lambda^{(1)}_s}{n_s}\right) \geqslant d(A)-\varepsilon$.

Again, $$f_2:=\left(y_1 \cdots y_k f_1^{h_1}(x_1, \dots, x_q)
\tilde y_1 \cdots \tilde y_{\tilde k} \right)\left( z_1 \cdots z_m \tilde f^{h_2}(\tilde x_1, \dots, \tilde x_q)
\tilde z_1 \cdots \tilde z_{\tilde m}\right) \notin \Id^H(A)$$
for some
arrangements of brackets, some $h_1, h_2 \in H$, and some $k, \tilde k, m, \tilde m \geqslant 0$, $d_2:=k+ \tilde k + m+ \tilde m
\leqslant N$ (maybe different from those for $f_1$). Again, we define $n_2 := 3q+d_1+d_2$.
Denote by $\lambda^{(2)}$ one of the partitions corresponding to
the irreducible components in the decomposition of $FS_{n_2}\bar f_2$.
We continue this procedure and prove the lemma.
\end{proof}

\begin{proof}[Proof of Theorem~\ref{TheoremHSimpleHPIexpHNAssoc}.]
Fix some $\varepsilon > 0$.
Consider $n_i \in \mathbb N$ and $\lambda^{(i)} \vdash n_i$ from Lemma~\ref{LemmaSequenceLambdaLowerHSimpleNAssoc}.
We have 
\begin{equation}\label{EqCnHi(A)LowerPhi}\begin{split} 
c_{n_i}^H(A) \geqslant \dim M(\lambda^{(i)}) = \frac{n_i!}{\prod_{j,k} h_{jk}}
  \geqslant \frac{n_i!}{(\lambda^{(i)}_1+s-1)! \cdots (\lambda^{(i)}_s+s-1)!} \\ \geqslant 
  \frac{n_i!}{n_i^{s(s-1)}\lambda^{(i)}_1! \cdots \lambda^{(i)}_s!} \geqslant
  \frac{C_1 n_i^{r_1} 
\left(\frac{n_i}{e}\right)^{n_i}}{\left(\frac{\lambda^{(i)}_1}{e}\right)^{\lambda^{(i)}_1}\cdots
\left(\frac{\lambda^{(i)}_s}{e}\right)^{\lambda^{(i)}_s}} \\ \geqslant C_1 n_i^{r_1}\left(\frac{1}
{\left(\frac{\lambda^{(i)}_1}{n_i}\right)^{\frac{\lambda^{(i)}_1}{n_i}}\cdots
\left(\frac{\lambda^{(i)}_s}{n_i}\right)^{\frac{\lambda^{(i)}_s}{n_i}}}\right)^{n_i} = C_1 n_i^{r_1}
 \left(\Phi\left(\frac{\lambda^{(i)}_1}{n_i}, \dots, \frac{\lambda^{(i)}_s}{n_i}\right)\right)^{n_i}\end{split}\end{equation}
 for some $C_1 > 0$ and $r_1 \leqslant 0$ which do not depend on $i$.
 
 Let $n \geqslant n_1$. Then $n_i \leqslant n < n_{i+1}$ for some $i\in\mathbb N$.
 Taking into account~\eqref{EqCnHi(A)LowerPhi}, Lemma~\ref{LemmaCodimNondecrHsimpleHNAssoc}
 and the fact that $\Phi(x_1, x_2, \ldots, x_s) \geqslant 1$
 as $0\leqslant x_1,\ldots, x_s \leqslant 1$, we
  get \begin{equation*}\begin{split} \sqrt[n]{c_n^H(A)} \geqslant \sqrt[n]{c_{n_i}^H(A)}\geqslant
  \sqrt[n]{C_1 n_i^{r_1}} \left(\Phi\left(\frac{\lambda^{(i)}_1}{n_i}, \dots, \frac{\lambda^{(i)}_s}{n_i}\right)\right)^\frac{n_i}{n} \\
\geqslant  
     \sqrt[n]{C_1 n^{r_1}}
 \left(\Phi\left(\frac{\lambda^{(i)}_1}{n_i}, \dots, \frac{\lambda^{(i)}_s}{n_i}\right)\right)^{\frac{n-N-q}{n}}  \geqslant  \sqrt[n]{C_1 n^{r_1}}
 \left(d(A)-\varepsilon\right)^{\frac{n-N-q}{n}}.
 \end{split}\end{equation*}
  Hence $\mathop{\underline\lim}_{n\to\infty} \sqrt[n]{c_n^H(A)} \geqslant d(A)-\varepsilon$.
  Since $\varepsilon > 0$ is arbitary, we get $\mathop{\underline\lim}_{n\to\infty}
  \sqrt[n]{c_n^H(A)} \geqslant d(A)$.
  Now Theorem~\ref{TheoremUpperBoundCodimPhiHNAssoc} yields 
   $\lim_{n\to\infty} \sqrt[n]{c_n^H(A)} = d(A)$.
 \end{proof}
 
 \section{Free-forgetful adjunctions corresponding to gradings and generalized $H$-actions}\label{SectionGrHAdjunction}
 
 In this section we analyze the free constructions from Sections~\ref{SectionHPI} and~\ref{SectionGradedPI} from the categorical point
 of view. Here we consider the categories of not necessarily associative algebras, though the analogous adjunctions, of course, exist
 in the case of associative and Lie algebras too.

\subsection{Gradings}\label{SectionGrAdjunction}

Let $T$ be a set and let $F$ be a field.  
Denote by $\mathbf{Vect}^{T\text{-}\mathrm{gr}}_F$ the category where the objects are all \textit{$T$-graded vector spaces} over $F$, i.e. vector spaces
$V$ with a fixed decomposition $V= \bigoplus_{t\in T} V^{(t)}$,
and the sets $\mathbf{Vect}^{T\text{-}\mathrm{gr}}_F(V,W)$ of morphisms
between $V=\bigoplus_{t\in T} V^{(t)}$ and $W=\bigoplus_{t\in T} W^{(t)}$
consist of all linear maps $\varphi \colon V \to W$
such that $\varphi\left(V^{(t)}\right)\subseteq W^{(t)}$
for all $t\in T$.

Denote by $\mathbf{NAAlg}^{T\text{-}\mathrm{pgr}}_F$
(``not necessarily associative partially $T$-graded algebras'')
 the category where the objects are all not necessarily associative algebras $A$ over $F$
with distinguished subspaces $\bigoplus_{t\in T} A^{(t)} \subseteq A$
(the inclusion can be proper) graded by $T$
and if $A \supseteq \bigoplus_{t\in T} A^{(t)}$ and $B\supseteq \bigoplus_{t\in T} B^{(t)}$ are two such objects
then, by the definition, the set $\mathbf{NAAlg}^{T\text{-}\mathrm{pgr}}_F(A,B)$ of morphisms $A\to B$ consists of all algebra homomorphisms $\varphi\colon A\to B$ such that 
$\varphi(A^{(t)})\subseteq B^{(t)}$ for every $t\in T$.

Denote by $U \colon \mathbf{NAAlg}^{T\text{-}\mathrm{pgr}}_F 
\to \mathbf{Vect}^{T\text{-}\mathrm{gr}}_F$
the forgetful functor that assigns to each object $A \supseteq \bigoplus_{t\in T} A^{(t)}$ the $T$-graded vector space $\bigoplus_{t\in T} A^{(t)}$
and restricts homomorphisms to the distinguished subspaces.

Let $V= \bigoplus_{t\in T} V^{(t)}$ be a $T$-graded space.
Let $Y^{(t)}$ be bases in $V^{(t)}$. Denote by $KV$
the absolutely free non-associative algebra $F\lbrace Y\rbrace$
on the basis $Y=\bigsqcup_{t\in T} Y^{(t)}$.
In the basis invariant form, $$KV = \bigoplus_{n=1}^\infty \bigoplus_{\substack{\text{all possible} \\ \text{arrangements} \\
\text{of brackets}}} \underbrace{V \otimes \dots \otimes V}_n$$
and the multiplication is defined
by $vw=v\otimes w$ (the arrangement of brackets in both sides is the same).
We identify $V$ with the corresponding subspace in $KV$
and treat $KV \supseteq V= \bigoplus_{t\in T} V^{(t)}$
as an object of $\mathbf{NAAlg}^{T\text{-}\mathrm{pgr}}_F$.

For each $\varphi \in \mathbf{Vect}^{T\text{-}\mathrm{gr}}_F(V,W)$
 there exists a unique algebra homomorphism $K\varphi \colon KV \to KW$
such that $(K\varphi)\bigl|_{V}=\varphi$.

\begin{proposition}
The functor $K \colon \mathbf{Vect}^{T\text{-}\mathrm{gr}}_F 
\to \mathbf{NAAlg}^{T\text{-}\mathrm{pgr}}_F$ is the left adjoint to $U \colon \mathbf{NAAlg}^{T\text{-}\mathrm{pgr}}_F 
\to \mathbf{Vect}^{T\text{-}\mathrm{gr}}_F$.
\end{proposition}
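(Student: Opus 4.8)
The plan is to exhibit the natural bijection
$$\mathbf{NAAlg}^{T\text{-}\mathrm{pgr}}_F(KV, A) \;\cong\; \mathbf{Vect}^{T\text{-}\mathrm{gr}}_F(V, UA)$$
for every $T$-graded vector space $V=\bigoplus_{t\in T}V^{(t)}$ and every object $A\supseteq\bigoplus_{t\in T}A^{(t)}$ of $\mathbf{NAAlg}^{T\text{-}\mathrm{pgr}}_F$, and then to check that it is natural in both arguments. First I would recall that, by the universal property of the absolutely free non-associative algebra $F\lbrace Y\rbrace$ on a set $Y$, every set-map $Y\to A$ extends uniquely to an algebra homomorphism $F\lbrace Y\rbrace\to A$. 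Since $KV=F\lbrace Y\rbrace$ with $Y=\bigcup_{t\in T}Y^{(t)}$ a basis adapted to the grading, an algebra homomorphism $KV\to A$ is the same thing as an arbitrary set-map $Y\to A$, equivalently an arbitrary linear map $V\to A$ (because $Y$ is a basis of $V$).

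The key point is to identify which of these correspond to \emph{morphisms} in the two categories. On the right-hand side, a morphism $V\to UA$ is a linear map $\varphi\colon V\to A$ with $\varphi(V^{(t)})\subseteq A^{(t)}$ for all $t$, equivalently a linear map sending the basis $Y^{(t)}$ into $A^{(t)}$. On the left-hand side, an algebra homomorphism $\Psi\colon KV\to A$ is a morphism in $\mathbf{NAAlg}^{T\text{-}\mathrm{pgr}}_F$ precisely when $\Psi((KV)^{(t)})\subseteq A^{(t)}$, where $(KV)^{(t)}=V^{(t)}$ is the fixed graded subspace of $KV$; that is, when $\Psi|_V$ maps $V^{(t)}$ into $A^{(t)}$. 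Thus both sides are in bijection with the set of linear maps $V\to A$ carrying $V^{(t)}$ into $A^{(t)}$, and the bijection is implemented by restriction $\Psi\mapsto\Psi|_V$ in one direction and by the unique homomorphic extension $\varphi\mapsto K'\varphi$ (where I write $K'\varphi\colon KV\to A$ for the extension, not to be confused with the functor $K$) in the other. The two composites are identities: $(K'\varphi)|_V=\varphi$ by construction, and $K'(\Psi|_V)=\Psi$ because an algebra homomorphism out of $KV$ is determined by its restriction to the generating subspace $V$.

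Finally I would verify naturality. Given $\psi\in\mathbf{Vect}^{T\text{-}\mathrm{gr}}_F(V',V)$ and $g\in\mathbf{NAAlg}^{T\text{-}\mathrm{pgr}}_F(A,B)$, one must check that the square relating $\Psi\mapsto\Psi|_V$ for $(V,A)$ and for $(V',B)$ commutes, i.e. that for $\Psi\colon KV\to A$ one has $(g\circ\Psi\circ K\psi)|_{V'}=(g|_{UA})\circ(\Psi|_V)\circ\psi$ as maps $V'\to UB$. This is immediate: $K\psi$ restricts to $\psi$ on $V'$ by the defining property of the functor $K$ on morphisms, $g$ restricts to $Ug$ on the fixed subspaces by definition of the forgetful functor, and composition of restrictions is the restriction of the composition. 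There is no serious obstacle here; the only mild subtlety, which I would state explicitly, is that the object $KV$ of $\mathbf{NAAlg}^{T\text{-}\mathrm{pgr}}_F$ comes equipped with the \emph{fixed} graded subspace $V=\bigoplus_t V^{(t)}\subseteq KV$ rather than with any grading on all of $KV$ — indeed $KV$ need not admit a $T$-grading extending that of $V$ when $T$ carries no semigroup structure — and it is exactly this feature of the category $\mathbf{NAAlg}^{T\text{-}\mathrm{pgr}}_F$ that makes the adjunction work for an arbitrary index set $T$.
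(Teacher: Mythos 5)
Your proof is correct and follows essentially the same route as the paper's: the bijection is given by restriction to $V$ and its inverse by the unique homomorphic extension, with the observation that the morphism conditions on both sides match. The paper states this in one sentence; you merely flesh out the details (including naturality), which is fine.
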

\begin{proof}
If $V\in \mathbf{Vect}^{T\text{-}\mathrm{gr}}_F$
and $A \in \mathbf{NAAlg}^{T\text{-}\mathrm{pgr}}_F$,
then each morphism $KV \to A$ is uniquely determined
by its restriction to $V$. Hence we obtain
a natural bijection $\mathbf{NAAlg}^{T\text{-}\mathrm{pgr}}_F(KV,A)\to \mathbf{Vect}^{T\text{-}\mathrm{gr}}_F(V, UA)$.
\end{proof}

Suppose now that $V=\bigoplus_{t\in T} V^{(t)}$
where $V^{(t)}$ are the vector spaces with the formal bases $\left(x_i^{(t)}\right)_{i\in\mathbb N}$.
Then $KV$ can be identified with $F\lbrace X^{T\text{-}\mathrm{gr}}\rbrace$
from Section~\ref{SectionGradedPI}.
Every $T$-graded algebra $A$ can be treated as an object of $\mathbf{NAAlg}^{T\text{-}\mathrm{pgr}}_F$
where the subspace $\bigoplus_{t\in T} A^{(t)}$ coincides with $A$.
In this case we have a bijection $\mathbf{NAAlg}^{T\text{-}\mathrm{pgr}}_F(KV,A)\to \mathbf{Vect}^{T\text{-}\mathrm{gr}}_F(V, UA)$
which means that every map $\psi \colon X^{T\text{-}\mathrm{gr}} \to A$,
such that $\psi\left(X^{(t)}\right) \subseteq A^{(t)}$ for each $t\in T$,
can be uniquely extended to an algebra homomorphism $\bar \psi \colon KV \to A$
such that $\bar\psi\left(X^{(t)}\right) \subseteq A^{(t)}$.

\subsection{Generalized $H$-actions}\label{SectionHAdjunction}

Let $H$ be a unital associative algebra over a field $F$.
Denote by ${}_H \mathcal M$ the category of left $H$-modules
and by ${}_H \mathbf{NAAlgSubMod}$ 
(``not necessarily associative algebras with subspaces that are $H$-modules'')
the category where the objects are all not necessarily associative algebras $A$ over $F$ with distinguished subspaces $A_0 \subseteq A$ (the inclusion can be proper),
which are left $H$-modules, and for objects $A \supseteq A_0$
and $B \supseteq B_0$ the set ${}_H \mathbf{NAAlgSubMod}(A,B)$ of morphisms consists of all algebra homomorphisms $\varphi \colon A \to B$ where $\varphi(A_0)\subseteq B_0$ and $\varphi\bigl|_{A_0}$
is a homomorphism of $H$-modules. Here we again have an obvious forgetful functor $U \colon 
{}_H \mathbf{NAAlgSubMod} \to {}_H \mathcal M$ where $UA:=A_0$ and $U\varphi := \varphi\bigl|_{A_0}$.

Let $K$ be a functor ${}_H \mathcal M \to {}_H \mathbf{NAAlgSubMod}$ that assigns to each 
left $H$-module $V$ the absolutely free non-associative algebra $KV:=F\lbrace Z \rbrace$
 where $Z$ is a basis in $V$. 
 In other words, 
 $$KV = \bigoplus_{n=1}^\infty \bigoplus_{\substack{\text{all possible} \\ \text{arrangements} \\
\text{of brackets}}} \underbrace{V \otimes \dots \otimes V}_n$$
and the multiplication is defined
by $vw=v\otimes w$ (the arrangement of brackets in both sides is the same).
We identify $V$ with the corresponding subspace in $KV$
and treat $KV \supseteq V$ as an object of ${}_H \mathbf{NAAlgSubMod}$.
For each $\varphi \in {}_H \mathcal M(V,W)$
 there exists a unique algebra homomorphism $K\varphi \colon KV \to KW$
such that $(K\varphi)\bigl|_{V}=\varphi$.

\begin{proposition}
The functor $K \colon {}_H \mathcal M \to {}_H \mathbf{NAAlgSubMod}$ is the left adjoint to $U \colon 
{}_H \mathbf{NAAlgSubMod} \to {}_H \mathcal M$.
\end{proposition}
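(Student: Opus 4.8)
The plan is to establish, for every left $H$-module $V$ and every object $A\supseteq A_0$ of ${}_H\mathbf{NAAlgSubMod}$, a bijection
$$\Phi_{V,A}\colon {}_H\mathbf{NAAlgSubMod}(KV,A)\longrightarrow {}_H\mathcal M(V,UA),$$
natural in $V$ and $A$, exactly along the lines of the proof of the analogous proposition for gradings in Subsection~\ref{SectionGrAdjunction}. The map $\Phi_{V,A}$ is restriction to the distinguished subspace: it sends an algebra homomorphism $\varphi\colon KV\to A$ with $\varphi(V)\subseteq A_0$ and $\varphi\bigl|_V$ an $H$-module homomorphism to $\varphi\bigl|_V\in {}_H\mathcal M(V,A_0)={}_H\mathcal M(V,UA)$; this is well defined precisely by the definition of the morphisms of ${}_H\mathbf{NAAlgSubMod}$.

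Next I would construct the inverse. Choosing a basis $Y$ of $V$, the algebra $KV=F\lbrace Y\rbrace$ is the absolutely free non-associative algebra on $Y$, so every set map $Y\to A$ extends uniquely to an algebra homomorphism $KV\to A$. Given $\psi\in {}_H\mathcal M(V,A_0)$, let $\bar\psi\colon KV\to A$ be the algebra homomorphism extending $\psi\bigl|_Y$. Two algebra homomorphisms $KV\to A$ that agree on $Y$ agree on the $F$-span $V$ and hence on all of $KV$, so $\bar\psi$ is the unique algebra homomorphism with $\bar\psi\bigl|_V=\psi$. In particular $\bar\psi(V)=\psi(V)\subseteq A_0$ and $\bar\psi\bigl|_V=\psi$ is an $H$-module homomorphism, so $\bar\psi$ is a morphism of ${}_H\mathbf{NAAlgSubMod}$, and $\psi\mapsto\bar\psi$ is visibly a two-sided inverse of $\Phi_{V,A}$.

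It then remains to verify naturality. For a morphism $f\colon V'\to V$ in ${}_H\mathcal M$ and a morphism $g\colon A\to B$ in ${}_H\mathbf{NAAlgSubMod}$ one checks directly that $\Phi_{V',B}(g\circ\varphi\circ Kf)=(Ug)\circ\Phi_{V,A}(\varphi)\circ f$ for every $\varphi\in {}_H\mathbf{NAAlgSubMod}(KV,A)$; this is immediate since every arrow in sight is either a restriction to a distinguished subspace or a unique free extension, and $(Kf)\bigl|_{V'}=f$, $Ug=g\bigl|_{A_0}$. Hence $\Phi$ is a natural isomorphism ${}_H\mathbf{NAAlgSubMod}(K-,-)\cong {}_H\mathcal M(-,U-)$, which is exactly the assertion $K\dashv U$.

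I expect the only point requiring any care --- just as in Subsection~\ref{SectionGrAdjunction} --- to be the bookkeeping around the choice of basis $Y$: one should note that the basis-free description $KV=\bigoplus_{n\geqslant 1}\bigoplus_{\text{arrangements of brackets}}V^{\otimes n}$ shows that $KV$, together with the canonical inclusion $V\hookrightarrow KV$ serving as the unit of the adjunction, does not depend on $Y$, so that the ``unique extension'' property can legitimately be stated in terms of $V$ itself. Beyond that, this is the standard free--forgetful adjunction argument; note in particular that no generalized $H$-action on $KV$ is needed, which is the whole reason for passing to the enlarged category ${}_H\mathbf{NAAlgSubMod}$ in which morphisms only constrain the $H$-module structure on the distinguished subspace $A_0$.
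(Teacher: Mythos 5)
Your proposal is correct and follows the same route as the paper: the paper's proof is a one-line observation that every morphism $KV\to A$ is uniquely determined by its restriction to $V$, yielding the natural bijection ${}_H\mathbf{NAAlgSubMod}(KV,A)\to {}_H\mathcal M(V,UA)$, and your argument simply spells out the inverse (unique free extension of an $H$-module map $V\to A_0$) and the naturality check in full detail. The points you flag --- basis-independence of $KV$ and the fact that no generalized $H$-action on $KV$ is required --- are exactly the reasons the enlarged category is introduced, so nothing is missing.
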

\begin{proof}
If $V\in {}_H \mathcal M$
and $A \in {}_H \mathbf{NAAlgSubMod}$,
then each morphism $KV \to A$ is uniquely determined
by its restriction to $V$. Hence we obtain
a natural bijection ${}_H \mathbf{NAAlgSubMod}(KV,A)\to {}_H \mathcal M(V, UA)$.
\end{proof}

Suppose now that $V$ is a free left $H$-module with a formal $H$-basis $Y$, i.e. $V=\bigoplus_{y\in Y} Hy$.
Then $KV$ can be identified with $F\lbrace Y | H \rbrace$
from Section~\ref{SectionHPI}.
Every algebra $A$ with a generalized $H$-action can be treated as an object of 
${}_H \mathbf{NAAlgSubMod}$ where the $H$-module $A_0$ coincides with $A$.
In this case we have a bijection ${}_H \mathbf{NAAlgSubMod}(KV,A)\to {}_H \mathcal M(V, UA)$
which means that every map $\psi \colon Y \to A$
can be uniquely extended to an algebra homomorphism $\bar \psi \colon KV \to A$
such that $\bar\psi\left(h y\right) = h \bar\psi\left( y\right)$
for every $y\in Y$.

\section*{Acknowledgements}

The author is grateful to Yuri Bahturin and Mikhail Zaicev who drew his attention to this topic. In addition, the author appreciates the referee for carefully reading the manuscript
and providing a list of misprints and suggestions.

\end{document}